 \newtheorem{thm}{Theorem}[section]
 \newtheorem{Corollary}[thm]{Corollary}
 \newtheorem{Lemma}[thm]{Lemma}
 \newtheorem{Proposition}[thm]{Proposition}
 \newtheorem*{TheoremIntro}{Theorem}
 \newtheorem*{mainTheorem}{Theorem \ref{MainTheorem}}
 \newtheorem{KeyLemma}[thm]{Key Lemma}
\newtheorem{UHypothesis}[thm]{{Universal Hypothesis}}
\newtheorem*{fptsetPropintro}{Proposition \ref{fptInterval: P}}
 \theoremstyle{definition}
 \newtheorem{Definition}[thm]{Definition}
 \newtheorem{Remark}[thm]{Remark}
 \newtheorem{Example}[thm]{Example}
 \numberwithin{equation}{subsection}
\newcommand{\up}[1]{\left\lceil #1 \right\rceil}
\newcommand{\down}[1]{\left\lfloor #1 \right\rfloor}
\newcommand{\mfrak}[1]{\mathfrak{#1}}
\newcommand{\Hom}{\operatorname{Hom}}
\renewcommand{\tilde}{\widetilde}
\newcommand{\m}{\mfrak{m}}
\renewcommand{\bold}[1]{\mathchoice{\hbox{\boldmath $\displaystyle #1$}}
        {\hbox{\boldmath $\textstyle #1$}}
        {\hbox{\boldmath $\scriptstyle #1$}}
        {\hbox{\boldmath $\scriptscriptstyle #1$}}}
\newcommand{\fpt}[1]{\bold{\operatorname{fpt}}(#1)}
\newcommand{\tr}[2]{\left \langle {#1} \right \rangle_{#2}} 
\newcommand{\tail}[2]{ \left \ldbrack {#1} \right \rdbrack_{#2}}
\renewcommand{\repeat}[2]{ \overline{\tr{#1}{}}_{#2}}
\newcommand{\lct}[1]{\bold{\operatorname{lct}}(#1)}
\newcommand{\fptset}[1]{\bold{\operatorname{FPT}}_{#1}}
\renewcommand{\hom}[1]{\Hom_R(R^{1/p^{#1}}, R)}
\newcommand{\R}[1]{R^{1/p^{#1}}}
\renewcommand{\(}{\left(}
\renewcommand{\)}{\right)}
\newcommand{\rref}[1]{(\ref{#1})}
\newcommand{\set}[1]{ \left\{ \, #1 \, \right\}}
\newcommand{\pair}[3]{\left( {#1}, {#3} \bullet {#2} \right)}
\newcommand{\error}{\varepsilon}
\renewcommand{\th}{\text{th}}
\newcommand{\dee}{d}
\newcommand{\ux}{\underline{\bold{x}}}
\newcommand{\bracket}[2]{ {#1}^{[p^{#2}]}}
\begin{document}

\title{$F$-purity of hypersurfaces}
\author{ Daniel J. Hern\'andez }
\thanks{The author was partially supported by the National Science Foundation RTG grant number 0502170 at the University of Michigan.}

\begin{abstract}  Motivated by connections with birational geometry over $\mathbb{C}$, the theory of $F$-purity for rings of positive characteristic may be extended to a theory of $F$-purity for ``pairs'' \cite{HW2002}.  Given an element $f$ of an $F$-pure ring of positive characteristic, this extension allows us to define the $F$-pure threshold of $f$, denoted $\fpt{f}$.  This invariant measures the singularities of $f$, and may be thought of as a positive characteristic analog of the log canonical threshold, an invariant that typically appears in the study of singularities of hypersurfaces over $\mathbb{C}$.  In this note, we study $F$-purity of pairs, and show (as is the case with log canonicity over $\mathbb{C}$) that $F$-purity is preserved at the $F$-pure threshold.  We also characterize when $F$-purity is equivalent to \emph{sharp} $F$-purity, an alternate notion of purity for pairs introduced in \cite{Schwede2008}.  These results on purity at the threshold generalize results appearing in \cite{Hara2006,Schwede2008}, and were expected to hold by many experts in the field.  We conclude by extending results in \cite{BMS2009} on the set of all $F$-pure thresholds to the most general setting. 
\end{abstract}

\maketitle

\section*{Introduction}

Let $R$ be a domain of characteristic $p>0$.  The \emph{$e^{\th}$-iterated Frobenius} map $R \stackrel{F^e}{\to} R$ (defined by $r \mapsto r^{p^e}$) is a ring homomorphism whose image is the subring $R^{p^e} \subseteq R$ consisting of all $\( p^e \)^{\th}$ powers of elements of $R$.  The Frobenius map has been an important tool in commutative algebra since Kunz characterized regular rings  as those for which $R$ is flat over $R^{p^e}$ \cite{Kunz}.  In general, singular rings exhibit pathological behavior with respect to Frobenius, and by imposing conditions on the structure of $R$ as an $R^{p^e}$-module, new classes of singularities can be defined.  For example, we say that $R$ is \emph{$F$-finite} if $R$ is a finitely generated $R^{p^e}$ module for every (equivalently, for some) $e \geq 1$. We call $R$ \emph{$F$-pure} (or \emph{$F$-split})  if the inclusion $R^{p^e} \subseteq R$ splits as a map of $R^{p^e}$-modules for all (equivalently, for some) $e \geq 1$. \cite{HR1976}.  The notion of $F$-purity is a critical ingredient in the proof of the well-known Hochster-Roberts Theorem on the Cohen-Macaulay property of rings of invariants \cite{HR1974}.


By modifying the condition that $R^{p^e} \subseteq R$ splits over $R^{p^e}$, one may extend the notion of purity for rings to a more general setting.  A \emph{pair}, denoted $\pair{R}{f}{\lambda}$, consists of the combined information of an ambient ring $R$, a non-zero, non-unit element $f \in R$, and a non-negative real parameter $\lambda$.  We say that the pair $\pair{R}{f}{\lambda}$ is \emph{$F$-pure} if the inclusion $R^{p^e} \cdot f^{\down{ (p^e-1) \lambda}} \subseteq R$ splits as a map of $R^{p^e}$-modules for all $e \gg 0$ \cite{HW2002}.  Here, $R^{p^e} \cdot f^N$ denotes the $R^{p^e}$-submodule of $R$ generated by $f^N$. The purity condition for pairs encapsulates the classical one, as the ring $R$ is $F$-pure if and only if the pair $\pair{R}{f}{0}$ is $F$-pure.  

Though technical, this extension adds great flexibility to the theory, and allows one to define \emph{$F$-pure thresholds}.  The $F$-pure threshold of $f$, denoted $\fpt{f}$,  is the supremum over all $\lambda \geq 0$ such that the pair $\pair{R}{f}{\lambda}$ is $F$-pure.  This definition is analogous to that of the \emph{log canonical threshold} in complex algebraic geometry, which we now briefly recall. 

Let $S$ be a regular ring of finite type over $\mathbb{C}$, and let $g$ be any non-zero, non-unit element of $S$.  Via (log) resolution of singularities, one may define the notion of \emph{log canonical singularities} for pairs $\pair{S}{g}{\lambda}$ \cite{Laz2004, BL2004}.  When the ambient space is a polynomial ring over $\mathbb{C}$, we have the following concrete description: $\pair{\mathbb{C}[\ux]}{g}{\lambda}$ is said to be \emph{log canonical} if  for every $0 \leq \error < \lambda$, the real-valued function $\frac{1}{|g|^{2 \error}}$ is locally integrable in a neighborhood of every point.  We define the \emph{log canonical threshold} of $g$, denoted $\lct{g}$, to be the supremum over all $\lambda \geq 0$ such that $\pair{S}{g}{\lambda}$ is log canonical.  The following theorem, one of many relating singularities defined over $\mathbb{C}$ with those in positive characteristic, illustrates the tight relationship between $F$-purity and log canonical singularities.  

\begin{TheoremIntro} \cite{HW2002}
Let $g \in S$ be as above, and let $g_p \in S_p$ denote the reduction of $g$ and $S$ to prime characteristic $p > 0$; see \cite{Karen1997} for a concrete discussion of this process.  If $\pair{S_p}{g_p}{\lambda}$ is $F$-pure for infinitely many $p$, then $\pair{S}{f}{\lambda}$ is log canonical.  
\end{TheoremIntro}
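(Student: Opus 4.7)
The plan is to prove the contrapositive: if $\pair{S}{g}{\lambda}$ is not log canonical, then $\pair{S_p}{g_p}{\lambda}$ fails to be $F$-pure for all sufficiently large primes $p$. I would begin by fixing a log resolution $\mu : Y \to X = \Spec S$ of the pair $(X, \mathrm{div}(g))$, writing $\mu^{*}\mathrm{div}(g) = \sum_i a_i E_i$ and $K_{Y/X} = \sum_i k_i E_i$ supported on a simple normal crossing divisor. In this language, $\pair{S}{g}{\lambda}$ fails to be log canonical precisely when some component $E_{i_0}$ satisfies $\lambda a_{i_0} \geq k_{i_0} + 1$. Standard spreading-out then guarantees that, for all $p \gg 0$, the data $\mu_p : Y_p \to X_p$, the divisors $E_{i,p}$, and the numerical invariants $a_i, k_i$ all descend to characteristic $p$ and retain the structure of a log resolution of $\mathrm{div}(g_p)$.

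Assume, toward contradiction, that $\pair{S_p}{g_p}{\lambda}$ is $F$-pure for infinitely many such $p$. For each, the $F$-purity hypothesis produces $e \geq 1$ and a splitting $\phi \in \Hom_{S_p}(S_p^{1/p^e}, S_p)$ with $\phi\bigl((g_p^{\down{(p^e - 1)\lambda}})^{1/p^e}\bigr) = 1$. Since $S_p$ is regular, Grothendieck duality identifies $\Hom_{S_p}(S_p^{1/p^e}, S_p)$ with an explicit twist of $S_p^{1/p^e}$ by the $(1 - p^e)$-th power of the canonical sheaf. Transporting this identification across the resolution $\mu_p$, the splitting yields a nowhere-vanishing global section on $Y_p$ of a line bundle whose class, after absorbing $\mu_p^{*}\mathrm{div}\bigl(g_p^{\down{(p^e-1)\lambda}}\bigr)$, is of the form $(p^e - 1)K_{Y_p/X_p} - \down{(p^e - 1)\lambda}\,\mu_p^{*}\mathrm{div}(g_p)$ up to bounded floor corrections supported on exceptional components.

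The coefficient of $E_{i_0, p}$ in this test divisor is $(p^e - 1)(\lambda a_{i_0} - k_{i_0}) - O(1)$, which diverges to $+\infty$ as $p^e \to \infty$ because $\lambda a_{i_0} - k_{i_0} \geq 1$. Thus the putative section must vanish along $E_{i_0, p}$ for $p^e$ sufficiently large, contradicting its nonvanishing guaranteed by the splitting property. Hence $\pair{S_p}{g_p}{\lambda}$ fails to be $F$-pure for all but finitely many $p$, giving the desired contrapositive.

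The main obstacle is the bridge appearing in the second paragraph: one must rigorously convert the local-algebraic splitting of $S_p \cdot g_p^{\down{(p^e-1)\lambda}} \hookrightarrow S_p^{1/p^e}$ into a global-geometric section of an explicit line bundle on the resolution $Y_p$. This requires a careful trace-of-Frobenius / Grothendieck duality computation, controlled against the floor discrepancies arising when $(p^e - 1)\lambda$ is not integral, and the verification that the spreading-out process is compatible with the duality identification and with the descent of the splitting to $Y_p$. Once that algebra-to-geometry dictionary is in place, the purely numerical inequality $\lambda a_{i_0} - k_{i_0} - 1 \geq 0$ produces the contradiction essentially for free.
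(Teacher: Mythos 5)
First, a point of order: the paper does not prove this statement. It is quoted from Hara and Watanabe \cite{HW2002} purely as motivation, so there is no internal proof to compare against. Your overall strategy --- contrapositive via a log resolution, spreading out to characteristic $p$, and converting the splitting into a section of a twist by $(1-p^e)K$ transported to the resolution --- is indeed the route taken in the literature, so the only question is whether your sketch is sound.

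As written, it is not, because you negate the wrong condition. Log canonicity of $\pair{S}{g}{\lambda}$ is the closed condition $\lambda a_i \leq k_i + 1$ for all $i$; its failure is the \emph{strict} inequality $\lambda a_{i_0} > k_{i_0} + 1$ for some $i_0$. The condition you write, $\lambda a_{i_0} \geq k_{i_0} + 1$, is the failure of klt, not of log canonicity. This is not cosmetic: you then claim the relevant coefficient of $E_{i_0,p}$ grows like $(p^e-1)(\lambda a_{i_0} - k_{i_0}) - O(1)$ and derive a contradiction whenever $\lambda a_{i_0} - k_{i_0} \geq 1$. If that were correct, it would show that every log canonical but non-klt pair fails to be $F$-pure for $p \gg 0$; but $\pair{\mathbb{F}_p[x]}{x}{1}$ has $\lambda a_{i_0} - k_{i_0} = 1$ and is $F$-pure for every $p$, since $x^{p^e-1} \notin (x^{p^e})$. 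The term you are sweeping into ``$O(1)$'' in fact contains a contribution of size $p^e-1$ per exceptional divisor, coming from the conductor of the trace map $F^e_{*}\omega_{Y_p} \to \omega_{Y_p}$; this is precisely the source of the ``$+1$'' in $k_{i_0}+1$. The quantity that actually diverges is $(p^e-1)\left(\lambda a_{i_0} - k_{i_0} - 1\right)$ plus a genuinely bounded floor error, and it diverges only under the strict inequality. To repair the argument you must (a) state the negation of log canonicity with strict inequality, and (b) carry the duality computation far enough to see that the linear-in-$p^e$ correction along each divisor is exactly $-(p^e-1)$, after which the numerical contradiction does follow.
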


\noindent The converse is conjectured to hold; see \cite{Polynomials, Takagi2011} for recent positive results.  

In \cite{Schwede2008}, we are introduced to an alternate notion of purity for pairs called \emph{sharp $F$-purity}.  A pair $\pair{R}{f}{\lambda}$ is said to be sharply $F$-pure if $R^{p^e} \cdot f^{\up{(p^e-1) \lambda}} \subseteq R$ splits for some $e \geq 1$, and the above-mentioned theorem holds after replacing ``$F$-pure'' with ``sharply $F$-pure'' \cite{Schwede2008}.  Given the close ties between (sharp) $F$-purity and log canonical singularities, one is motivated to ask whether certain properties of log canonicity also hold for (sharp) $F$-purity in the positive characteristic setting.

One such property of log canonical singularities, which follows essentially from its definition, is that $\pair{S}{g}{\lct{g}}$ must be log canonical.  We show that the analogous property, though not an immediate consequence of the definitions involved, also holds for $F$-purity.  The situation for sharp $F$-purity is slightly more complicated.

\begin{mainTheorem} If $R$ is an $F$-pure ring of characteristic $p>0$, then $\pair{R}{f}{\fpt{f}}$ is $F$-pure, and is sharply $F$-pure  if and only if $(p^e-1) \cdot \fpt{f} \in \mathbb{N}$ for some $e \geq 1$.
\end{mainTheorem}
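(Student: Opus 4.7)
Set $\lambda := \fpt{f}$.  The plan relies on the Key Lemma \ref{FTTruncationLemma}, which I anticipate provides the following equivalence: for $\mu$ of the form $k/(p^e-1)$, the pair $\pair{R}{f}{\mu}$ is $F$-pure if and only if the inclusion $R^{p^e} \cdot f^k \subseteq R$ splits over $R^{p^e}$ at the single level $e$.

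I would first establish $F$-purity of $\pair{R}{f}{\lambda}$ in the generic case $(p^e-1)\lambda \notin \mathbb{N}$.  Fix such an $e$, set $N := \down{(p^e-1)\lambda}$, and consider the truncation $\mu_e := N/(p^e-1)$.  Since $(p^e-1)\lambda \notin \mathbb{N}$, we have $\mu_e < \lambda$, so $\pair{R}{f}{\mu_e}$ is $F$-pure by definition of the threshold.  As $\mu_e$ has the special form required by the Key Lemma, we extract a splitting at level $e$ for exponent $(p^e-1)\mu_e = N$, which is exactly the condition witnessing $F$-purity of $\pair{R}{f}{\lambda}$ at level $e$.  If no $e \geq 1$ satisfies $(p^e-1)\lambda \in \mathbb{N}$, this argument covers all large $e$ and the $F$-purity assertion is complete; sharp purity then fails, as we see below.

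For the sharp $F$-purity equivalence, the $(\Rightarrow)$ direction proceeds by contradiction.  Suppose $\pair{R}{f}{\lambda}$ is sharply $F$-pure via a splitting at some level $e$ for $M := \up{(p^e-1)\lambda}$, and suppose for contradiction that $(p^e-1)\lambda \notin \mathbb{N}$.  Then $\mu_0 := M/(p^e-1) > \lambda$, and the Key Lemma applied to the given splitting yields $F$-purity of $\pair{R}{f}{\mu_0}$, contradicting $\mu_0 > \fpt{f}$.  For the $(\Leftarrow)$ direction, assume $\lambda = k/(p^{e_0}-1)$ for some positive integers $k, e_0$.  The goal now reduces to constructing a single splitting at level $e_0$ for exponent $k$: such a splitting witnesses sharp $F$-purity at level $e_0$ and, by the Key Lemma, gives full $F$-purity of $\pair{R}{f}{\lambda}$, thereby also resolving the outstanding $F$-purity case $(p^e-1)\lambda \in \mathbb{N}$ left open above.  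I would construct this splitting by a limiting argument: approximate $\lambda$ from below by $F$-pure parameters $\mu_n \nearrow \lambda$, each of which gives splittings at level $e_0$ for exponents strictly less than $k$, and combine these with the $F$-purity of the ambient ring $R$ to produce the splitting for the exact exponent $k$.

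The principal obstacle is the Key Lemma \ref{FTTruncationLemma} itself, whose proof must manipulate iterated composites $\psi^{(n)} : R \to R^{p^{ne}}$ of a base splitting $\psi : R \to R^{p^e}$ and reconcile them with splittings at Frobenius levels not divisible by $e$, presumably by splicing in splittings afforded by the $F$-purity of $R$.  A secondary and essential challenge is the construction of the level-$e_0$ splitting for the exact exponent $k$ in the rational case $\lambda = k/(p^{e_0}-1)$, since direct approximation from below by $F$-pure parameters only produces splittings for exponents strictly less than $k$, so a genuine refinement step is required.
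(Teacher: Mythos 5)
Your overall architecture matches the paper's: everything is funneled through a truncation/splitting dictionary, the case $(p^e-1)\fpt{f}\notin\mathbb{N}$ and the forward sharp-purity implication are handled exactly as in the paper, and both are correct as written. The direction of your anticipated ``Key Lemma'' that you actually use --- a splitting of $R^{p^e}\cdot f^{M}\subseteq R$ forces $M/(p^e-1)\le\fpt{f}$ --- is the paper's Corollary \ref{OneConditionSplittingCorollary}, so no circularity arises there.

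The gap is the step you yourself flag: producing a splitting of $R\cdot f^{k/p^{e_0}}\subseteq\R{e_0}$ when $\fpt{f}=\lambda=k/(p^{e_0}-1)$. Your proposed ``limiting argument'' cannot work as described: $F$-purity of $\pair{R}{f}{\mu}$ for $\mu<\lambda$ only ever yields exponents $\down{(p^{e_0}-1)\mu}\le k-1$ at level $e_0$, and composing such splittings with splittings of $R\subseteq\R{}$ does not raise the exponent, so a new input is genuinely required. The missing idea is that the target exponent $k$ at level $e_0$ corresponds to the \emph{parameter} $k/p^{e_0}=\tr{\lambda}{e_0}$, which is strictly below $\lambda=k/(p^{e_0}-1)$ because the denominators differ; hence it is reachable from below after all, provided one uses \emph{strong} $F$-purity rather than $F$-purity. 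Since $\fpt{f}$ coincides with the strongly $F$-pure threshold (Definition \ref{fpt: D2}), the pair $\pair{R}{f}{k/p^{e_0}}$ is strongly $F$-pure, so $R\cdot f^{\up{p^e\cdot k/p^{e_0}}/p^e}\subseteq\R{e}$ splits for \emph{some} $e$; one then invokes Lemma \ref{SplittingLemma} (taking roots of a splitting of $R\subseteq\R{}$ to change the Frobenius level) to convert this into a splitting at level $e_0$ with exponent exactly $\up{p^{e_0}\cdot k/p^{e_0}}=k$. This is precisely how the proof of Key Lemma \ref{FTTruncationLemma} establishes $p^{d}\tr{\fpt{f}}{d}\le\nu_f(p^d)$; indeed, if you use the paper's actual Key Lemma rather than your reformulation, the splitting you are missing is already contained in its statement, since $p^{e_0}\tr{\lambda}{e_0}=(p^{e_0}-1)\lambda=k$ whenever $(p^{e_0}-1)\lambda\in\mathbb{N}$ (Lemma \ref{RepeatLemma}).
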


The condition that $(p^e-1) \cdot \fpt{f} \in \mathbb{N}$ appearing above is equivalent to the condition that $\fpt{f}$ be a rational number whose denominator not divisible by $p$.  While it is known that $\fpt{f} \in \mathbb{Q}$ in many cases (see  \cite{BMS2008, BMS2009, KLZ2009, BSTZ2009}), explicit computations of $F$-pure thresholds appearing in the literature show that very often the denominator of $\fpt{f}$ is divisible by $p$ \cite{Diagonals, Binomials}.  Thus, there are many instances in which $F$-purity and sharp $F$-purity are not equivalent.  However, from the point of view of computations of $F$-pure thresholds of hypersurfaces (especially those reduced from characteristic zero), the condition that $(p^e-1) \cdot \fpt{f} \in \mathbb{N}$ has many  desirable consequences.  We emphasize that Theorem \ref{MainTheorem} holds assuming only that the ambient ring is $F$-pure, which is the minimal assumption needed to study $F$-purity of pairs.  We also note that Theorem \ref{MainTheorem} generalizes results appearing in \cite{Hara2006,Schwede2008}, in which the ambient ring is assumed to be an $F$-finite, (complete) regular local ring.

If $g$ again denotes an element of a regular ring of finite type over $\mathbb{C}$, it follows easily from the definition of log canonicity (in terms of resolution of singularities) that $\lct{g} \in \mathbb{Q} \cap [0,1]$.  Furthermore, every number in $\mathbb{Q} \cap [0,1]$ may be realized as a log canonical threshold:  if $\frac{r}{s} \in \mathbb{Q} \cap [0,1]$, then $\frac{r}{s} = \lct{x_1^s + \cdots + x_r^s}$ \cite[Example 3]{Howald2001a}, \cite[Example 9]{Howald2001b}.  The issue of which numbers may be realized as $F$-pure thresholds is more complicated, and was first considered in \cite{BMS2009}.  

As with log canonical thresholds, it is easy to verify (e.g., see Lemma \ref{BasicProperties: L})  that the $F$-pure threshold of a hypersurface is contained in $[0,1]$.  If one considers only ambient rings that are $F$-finite and regular of a fixed characteristic $p>0$, it was shown in  \cite[Proposition 4.3]{BMS2009} that there exist infinitely many non-empty open intervals contained in $\mathbb{Q} \cap [0,1]$ that cannot contain a number of the form $\fpt{f}$;  the arguments given therein rely on the behavior of \emph{$F$-jumping exponents} in $F$-finite regular rings.  In Proposition \ref{fptInterval: P}, we show that the aforementioned statement still holds if one replaces the condition that the ambient spaces be $F$-finite and regular with the minimal requirement that they be $F$-pure.

\begin{fptsetPropintro}
Let $\fptset{p}$ denote the set of all $\fpt{f}$, where $f \in R$ ranges over every element of every $F$-pure ring of characteristic $p>0$; see Definition \ref{fptsetDefinition}.  Then, for every $e \geq 1$ and every $\beta \in [0,1] \cap \frac{1}{p^e} \cdot \mathbb{N}$, we have that \[ \fptset{p}  \cap \( \hspace{.05in} \beta, \frac{p^e}{p^e-1} \cdot \beta \hspace{.05in} \) = \emptyset. \]
\end{fptsetPropintro}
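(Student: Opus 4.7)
The plan is to argue by contradiction: suppose that $\lambda = \fpt{f}$ lies in the open interval $\(\beta, p^e\beta/(p^e - 1)\)$, where $\beta = n/p^e$ with $0 \leq n \leq p^e$. The boundary case $n = p^e$ is immediate, since then $\beta = 1$ and the entire interval lies in $\(1, \infty\)$, while $\fptset{p} \subseteq [0, 1]$ (Lemma \ref{BasicProperties: L}). We therefore assume $n < p^e$ throughout.

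First I would invoke Theorem \ref{MainTheorem}: the pair $\pair{R}{f}{\lambda}$ is $F$-pure, so for each $e' \gg 0$ the inclusion $R^{p^{e'}} \cdot f^{\lfloor (p^{e'} - 1)\lambda\rfloor} \subseteq R$ admits an $R^{p^{e'}}$-linear splitting. The next step is to extract explicit floor values. From $n/p^e < \lambda < n/(p^e - 1)$ one sees that $(p^e - 1)\lambda \in \(n - n/p^e,\, n\) \subset \(n-1,\, n\)$, so $\lfloor(p^e - 1)\lambda\rfloor = n - 1$. More generally, at level $e' = ke$ the same calculation yields $\lfloor(p^{ke} - 1)\lambda\rfloor \geq np^{(k-1)e}$ as soon as $\lambda \geq np^{(k-1)e}/(p^{ke} - 1)$; since this threshold decreases monotonically to $\beta$ as $k \to \infty$, the estimate holds for all $k$ past some $k_{0}$. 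Consequently, for such $k$ the submodule $R^{p^{ke}} \cdot f^{np^{(k-1)e}} \subseteq R$ splits, which is precisely the sharp $F$-purity of $\pair{R}{f}{\mu_k}$ at level $ke$, for $\mu_k := np^{(k-1)e}/(p^{ke} - 1) = n/\(p^e - p^{-(k-1)e}\)$.

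The sequence $(\mu_k)_{k \geq 1}$ is decreasing, with $\mu_1 = n/(p^e - 1) = p^e\beta/(p^e - 1)$ equal to the upper endpoint of the forbidden interval, and $\mu_k \searrow \beta$ as $k \to \infty$. The strategy is to parlay the sharp $F$-purities at $\mu_k$ (for $k \geq k_{0} \geq 2$) into a sharp $F$-purity for $\pair{R}{f}{p^e\beta/(p^e - 1)}$, which by the supremum definition of $\fpt{f}$ would force $\fpt{f} \geq p^e\beta/(p^e - 1) > \lambda$, the desired contradiction. Equivalently, I would aim to produce a single $R^{p^e}$-linear splitting of the inclusion $R^{p^e} \cdot f^n \subseteq R$.

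The main obstacle is precisely this descent from high levels back down to level $e$: Frobenius iteration reliably propagates splittings \emph{upward}, but descending is not automatic. The plan for handling this is to combine a level-$ke$ splitting $\phi_k \colon R \to R^{p^{ke}}$ with the ambient $F$-pure splitting $\sigma \colon R \to R^{p^e}$ (and its Frobenius twists $\sigma^{(p^{je})}$ for $0 \leq j < k$) to assemble an $R^{p^e}$-linear map $\psi\colon R \to R^{p^e}$ with $\psi(f^n) = 1$. The arithmetic identity $(p^{ke} - 1)/(p^e - 1) = 1 + p^e + \cdots + p^{(k-1)e}$ is the natural bookkeeping device for aligning the exponents of $f$ along the descent; verifying that the resulting composition retains $R^{p^e}$-linearity (and not merely $R^{p^{ke}}$-linearity) while evaluating to $1$ on $f^n$ is the technical heart of the argument.
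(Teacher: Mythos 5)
Your overall strategy can be made to work and is genuinely different from the paper's, but as written there is a real gap: the step you yourself flag as ``the technical heart'' --- descending from a splitting at level $ke$ to one at level $e$ --- is left as a plan rather than executed, and the plan you sketch (assembling $\phi_k$ with the ambient splitting $\sigma$ and its Frobenius twists, with $(p^{ke}-1)/(p^e-1)=1+p^e+\cdots+p^{(k-1)e}$ as bookkeeping) is the wrong tool; no such assembly is needed.  In the $(p^e)^{\th}$-root formulation the descent is a one-line restriction: since $np^{(k-1)e}/p^{ke}=n/p^e$, your level-$ke$ splitting is an $R$-linear map $\theta\colon\R{ke}\to R$ with $\theta(f^{n/p^e})=1$, and $f^{n/p^e}$ already lies in the $R$-submodule $\R{e}\subseteq\R{ke}$, so $\theta|_{\R{e}}$ is the desired splitting of $R\cdot f^{n/p^e}\subseteq\R{e}$.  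This is precisely the first (easy) case of Lemma \ref{SplittingLemma}.  With that substitution the argument closes: $n=\up{(p^e-1)\cdot\tfrac{n}{p^e-1}}$, so $\pair{R}{f}{n/(p^e-1)}$ is sharply $F$-pure and $\fpt{f}\geq \frac{n}{p^e-1}=\frac{p^e}{p^e-1}\beta>\lambda$ by Definition \ref{fpt: D2}; equivalently, apply Corollary \ref{OneConditionSplittingCorollary} with $\alpha=n/(p^e-1)$, whose $e^{\th}$ truncation is $n/p^e$.  Two smaller points: you only need a single $k$ with $\mu_k<\lambda$, and invoking Theorem \ref{MainTheorem} is avoidable (and tidier to avoid, since it is proved after this proposition): because all your inequalities are strict, $F$-purity of $\pair{R}{f}{\lambda'}$ for some $\mu_k<\lambda'<\lambda$, immediate from the definition of $\fpt{f}$ together with Lemma \ref{SmallerExponent: L}, already yields $\down{(p^{ke}-1)\lambda'}\geq np^{(k-1)e}$.

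For comparison, the paper's proof is a short deduction from Proposition \ref{Repeating: P}: if $\lambda>\beta$ then $\tr{\lambda}{e}\geq\beta$ by Lemma \ref{RoundingLemma}, whence $\lambda\geq\repeat{\lambda}{e}=\frac{p^e}{p^e-1}\tr{\lambda}{e}\geq\frac{p^e}{p^e-1}\beta$.  There the splitting data flows in the opposite direction: Key Lemma \ref{FTTruncationLemma} gives a splitting of $R\cdot f^{\tr{\lambda}{e}}\subseteq\R{e}$, which is propagated \emph{upward} to all levels $ke$ (Lemma \ref{OneConditionSplittingLemma}) to force $\repeat{\lambda}{e}\leq\lambda$; you instead harvest $F$-purity at high levels and descend.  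Both are legitimate, but the paper's route reuses machinery needed for Theorem \ref{MainTheorem}, while yours leans on the coincidence of the $F$-pure and sharply $F$-pure thresholds as a black box.
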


\begin{Example} 
\label{fptInterval: E}
Proposition \ref{fptInterval: P} states that, for every $e \geq 1$, there exist $p^e-1$ disjoint open subintervals of $[0,1]$ that do not intersect $\fptset{p}$.  To better appreciate this condition, consider the intervals corresponding to $e=1,2$ and $3$ that cannot intersect $\fptset{2}$.

\begin{center}
\psset{unit=5.5in}

\begin{pspicture}(0,-.00001)(1,.00001)
\psset{linewidth=1pt,linestyle=dotted,dotsep=1pt}
\psline[arrows=|-](0,0)(.5,0)
\psset{linestyle=solid, arrows=o-o,linewidth=1pt,linecolor=blue}
\psline(.5,0)(1,0)
\put(-.08,-.01){$e=1$}
\end{pspicture}

\begin{pspicture}(0,-.00001)(1,.00001)
\psset{linewidth=1pt,linestyle=dotted,dotsep=1pt}
\psline[arrows=|-](0,0)(.25,0)
\psline(.3333,0)(.5,0)
\psline(.666,0)(.75,0)
\psset{linestyle=solid, dotstyle=o,arrows=o-o,linewidth=1pt, linecolor=blue}
\psline(.25,0)(.3333,0)
\psline(.5,0)(.6666,0)
\psline(.75,0)(1,0)
\put(-.08,-.01){$e=2$}
\end{pspicture}

\begin{pspicture}(0,-.00001)(1,.00001)
\psset{linewidth=1pt,linestyle=dotted,dotsep=1pt}
\psline[arrows=|-](0,0)(.125,0)
\psline(.142857,0)(.25,0)
\psline(.285714,0)(.375,0)
\psline(.428571,0)(.5,0)
\psline(.571428,0)(.625,0)
\psline(.714285,0)(.75,0)
\psline(.857142,0)(.875,0)
\psset{linestyle=solid,arrows=o-o,linewidth=1pt,linecolor=blue}
\psline(.125,0)(.142857,0)
\psline(.25,0)(.285714,0)
\psline(.375,0)(.428571,0)
\psline(.5,0)(.571428,0)
\psline(.625,0)(.714285,0)
\psline(.75,0)(.857142,0)
\psline(.875,0)(1,0)
\put(-.08,-.01){$e=3$}
\end{pspicture}

\begin{pspicture}(0,-.00001)(1,.00001)
\psline[linewidth=1pt,linestyle=solid, arrows=|-](0,0)(1,0)
\rput(0,-.04){$0$}
\psline[arrows=|-|](0,0)(.125,0)
\rput(.125,-.04){$\frac{1}{8}$}
\psline[arrows=-|](.125,0)(.25,0)
\rput(.25,-.04){$\frac{1}{4}$}
\psline[arrows=-|](.25,0)(.375,0)
\rput(.375,-.04){$\frac{3}{8}$}
\psline[arrows=-|](.375,0)(.5,0)
\rput(.5,-.04){$\frac{1}{2}$}
\psline[arrows=-|](.5,0)(.625,0)
\rput(.625,-.04){$\frac{5}{8}$}
\psline[arrows=-|](.625,0)(.75,0)
\rput(.75,-.04){$\frac{3}{4}$}
\psline[arrows=-|](.75,0)(.875,0)
\rput(.875,-.04){$\frac{7}{8}$}
\psline[arrows=-|](.875,0)(1,0)
\rput(1,-.04){$1$}
\end{pspicture} \\
\vspace{.4in} 
\textsc{Figure:}  Intervals corresponding to $e=1,e=2$, and $e=3$ that do not intersect $\fptset{2}$.
\end{center}
\end{Example}

As was first observed in \cite{BMS2009}, for every $e \geq 1$ we have that 
\[  \sum_{\beta \in [0,1] \cap \frac{1}{p^e} \cdot \mathbb{N}} \operatorname{length} \( \beta, \frac{p^e}{p^e-1} \cdot \beta \) = \frac{1}{2} .\] Thus, Proposition \ref{fptInterval: P} states that for every $e \geq 1$, there is a set of Lebesgue measure $\frac{1}{2}$ that does not intersect $\fptset{p}$.   Furthermore, Proposition \ref{fptInterval: P} may be used to show that $\fptset{p}$ is a set of Lebesgue measure zero.  This is not surprising because, as noted earlier, $F$-pure thresholds are very often rational numbers.  However, we stress that the issue of whether $\fptset{p} \subseteq \mathbb{Q}$ remains open in general.

The main results appearing in this article are obtained as corollaries of Key Lemma \ref{FTTruncationLemma}, which essentially says that all of the relevant ``splitting data'' for an element $f$ in an $F$-pure ring is encoded by the digits appearing in the unique (non-terminating) base $p$ expansion of $\fpt{f}$; see Definition \ref{ExpansionDefinition} for the definition of a non-terminating base $p$ expansion.  The proof of Key Lemma \ref{FTTruncationLemma} depends  mostly on taking $p^{\th}$ roots of elements and morphisms (see Definition \ref{rootofamap: D}) in a careful way, and in the case that the ambient space is $F$-finite and regular, Key Lemma \ref{FTTruncationLemma} is simply a translation of \cite[Proposition 1.9]{MTW2005} into the language of non-terminating base $p$ expansions.


\subsection*{Acknowledgements}  This note is part of the author's Ph.D thesis, which was completed at the University of Michigan under the direction of Karen Smith.  The author would like to thank Karen Smith, Mel Hochster, Emily Witt, Luis Nu$\tilde{\text{n}}$ez Betancourt, Daisuke Hirose, and Michael Von Korff for many enlightening discussions.

\section{Preliminaries}

Let $R$ be a reduced ring of prime characteristic $p>0$, and for $e \geq 1$, let $\R{e}$ denote the set of formal symbols $\{f^{1/p^e}:f \in R\}$.  We define a ring structure on $\R{e}$ via $f^{1/p^e} + g^{1/p^e} := (f+g)^{1/p^e}$ and $f^{1/p^e}~\cdot~g^{1/p^e}~:= (fg)^{1/p^e}$.  As $R$ is reduced, we have an inclusion $R \subseteq \R{e}$ given by $r \mapsto (r^{p^e})^{1/p^e}$.  If $R$ is a domain, then $\R{e}$ admits a more concrete description:   Let $L$ denote the algebraic closure of the fraction field of $R$, and for every $f \in R$, let $f^{1/p^e}$ denote the unique root of the equation $T^{p^e} - f \in L[T]$ in $L$.  We may then describe $\R{e}$ as the subring of $L$ consisting of all $\(p^e\)^{\th}$-roots of elements of $R$.  For example, if $R = \mathbb{F}_p[x_1, \cdots, x_m]$, then $\R{e} = \mathbb{F}_p[x_1^{1/p^e}, \cdots, x_m^{1/p^e}]$.  Via the inclusion (of rings) $\R{d} \subseteq \R{e+d}$ given by $r^{1/p^{d}} = (r^{p^e})^{1/p^{e + d}}$, we may identify $(\R{d})^{1/p^e}$ and $\R{e +d}$ as $\R{d}$-algebras. 

Let $R \stackrel{F^e}{\to} R$ denote the \emph{$e^{\th}$ iterated Frobenius morphism} defined by $r \mapsto r^{p^e}$, and let $F^e_{\ast} R$ denote $R$ when considered as an $R$-algebra via $F^e$.   If $R^{p^e} = \set{ r^{p^e} : r \in R}$ is the subring consisting of $\(p^e\)^{\th}$ powers of $R$, then the $R$-algebra structure of $F^e_{\ast} R$ and the $R^{p^e}$-algebra structure of $R$ are isomorphic.   We also note that $F^e_{\ast} R \cong \R{e}$ as $R$-modules via the isomorphism $r \mapsto r^{1/p^e}$.  This map and its inverse are often referred to as ``taking $(p^e)^{\th}$ roots'' and ``raising to $(p^e)^{\th}$ powers.''  We say that $R$ is \emph{$F$-finite} if $R^{1/p}$ (equivalently, $F_{\ast} R$) is a finitely-generated $R$-module.  One can show that $R$ is $F$-finite if and only if $\R{e}$ (equivalently, $F^e_{\ast}R$) is a finitely-generated $R$-module for some (equivalently, for all) $e \geq 1$.

If $S \subset T$ is an inclusion of rings, and $S \cdot t$ is the $S$-submodule of $T$ generated by $t \in T$, we say that the inclusion $S \cdot t \subseteq T$ \emph{splits over $S$} (or splits as a map of $S$-modules) if there exists a map $\theta \in \Hom_S \(T, S \)$ with $\theta \( t \) = 1$.  Recall that $R$ is said to be \emph{$F$-pure} (or \emph{$F$-split})  if the inclusion $R = R \cdot 1 \subseteq \R{}$ splits over $R$.  An $F$-pure ring is necessarily reduced, and $R$ is $F$-pure if and only if the inclusion $R \subseteq \R{e}$ splits as a map of $R$-modules for some (equivalently, for all) $e \geq 1$ \cite{HR1976}.  For $g \in R$, note that the inclusion $R \cdot g^{1/p^e} \subseteq R^{1/p^e}$ splits over $R$ if and only if $R^{p^e} \cdot g\subseteq R$ splits over $R^{p^e}$ if and only if $R \cdot g \subseteq F^e_{\ast} R$ splits over $R$.  As a matter of taste, we use the language of $(p^e)^{\th}$ roots when discussing such splittings.

\subsection{$F$-purity for pairs and $F$-pure thresholds}

\begin{Definition}
A \emph{pair} $\pair{R}{f}{\lambda}$ consists of the combined information of an ambient ring $R$, a hypersurface $f \in R$, and a non-negative real number $\lambda$.
\end{Definition}

\begin{UHypothesis}  For the remainder of this article, $R$ will be assumed to be an $F$-pure ring of characteristic $p>0$, and $f$ will be assumed to be a non-zero, non-unit in $R$.
\end{UHypothesis}

In what follows, $\up{\alpha}$ (respectively, $\down{\alpha}$) will denote the least integer greater than or equal to $\alpha$ (respectively, the greatest integer less than or equal to $\alpha$.)

\begin{Definition}
\label{PurityDefinition} \
 \cite{Tak2004, TW2004, Schwede2008}
The pair $\pair{R}{f}{\lambda}$ is said to be 
\begin{enumerate} 
\item \emph{$F$-pure} if $R \cdot f^{\down{(p^e-1) \lambda}/p^e} \subseteq \R{e}$ splits over $R$ for all $e \geq 1$,
\item \emph{strongly $F$-pure} if $R \cdot f^{\up{p^e \lambda}/p^e} \subseteq \R{e}$ splits over $R$ for some $e \geq 1$, and
\item \emph{sharply $F$-pure} if $R \cdot f^{\up{(p^e-1) \lambda}/p^e} \subseteq \R{e}$ splits over $R$ for some $e \geq 1$.
\end{enumerate}
\end{Definition}

Lemma \ref{SmallerExponent: L} shows that strong $F$-purity implies sharp $F$-purity. We also have that sharp $F$-purity 
implies  $F$-purity \cite{Schwede2008} (though the converse need not be true; see Example \ref{CounterExample: E}). 

\begin{Lemma}
\label{SmallerExponent: L}
If the inclusion $R \cdot f^{N/p^e} \subseteq \R{e}$ splits as a map of $R$-modules, so must the inclusion $R \cdot f^{a/p^e} \subseteq \R{e}$ for all $0 \leq a \leq N$.
\end{Lemma}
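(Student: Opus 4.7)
The plan is to construct the required splitting by pre-multiplying the given one by an appropriate power of $f^{1/p^e}$. Concretely, assume $\theta \in \Hom_R(R^{1/p^e}, R)$ is a splitting of the inclusion $R \cdot f^{N/p^e} \subseteq R^{1/p^e}$, so that $\theta(f^{N/p^e}) = 1$. Fix $0 \leq a \leq N$; since $N - a \geq 0$, the element $f^{(N-a)/p^e} = (f^{N-a})^{1/p^e}$ lies in $R^{1/p^e}$, so multiplication by it defines an $R$-linear endomorphism of $R^{1/p^e}$.

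Now define $\phi \colon R^{1/p^e} \to R$ by $\phi(x) = \theta\bigl(f^{(N-a)/p^e} \cdot x\bigr)$. This composition of $R$-linear maps is $R$-linear, and evaluating on the designated generator gives
\[
\phi(f^{a/p^e}) \;=\; \theta\bigl(f^{(N-a)/p^e} \cdot f^{a/p^e}\bigr) \;=\; \theta(f^{N/p^e}) \;=\; 1,
\]
exhibiting $\phi$ as a splitting of $R \cdot f^{a/p^e} \subseteq R^{1/p^e}$. The only thing to verify carefully is that the product $f^{(N-a)/p^e} \cdot f^{a/p^e}$ equals $f^{N/p^e}$ inside $R^{1/p^e}$, which follows directly from the ring structure on $R^{1/p^e}$ recalled at the start of Section~1.

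There is no real obstacle here; the content of the lemma is simply that splittings ``degrade'' as one lowers the exponent, and this degradation is witnessed by the transparent trick of absorbing the extra factor $f^{(N-a)/p^e}$ into the argument of $\theta$.
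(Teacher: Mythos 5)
Your proof is correct and is essentially identical to the paper's: both pre-compose the given splitting $\theta$ with the $R$-linear endomorphism of $R^{1/p^e}$ given by multiplication by $f^{(N-a)/p^e}$ and evaluate at $f^{a/p^e}$. Nothing further is needed.
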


\begin{proof}  Choose $\theta \in \hom{e}$ with $\theta \( f^{N/p^e} \) = 1$, and let $\phi$ denote the $R$-linear endomorphism of $\R{e}$ given by multiplication by $f^{ \frac{N-a}{p^e}}$. Then $\theta \circ \phi \in \hom{e}$ and $\(\theta \circ \phi\)\(f^{a/p^e}\) = 1$.
\end{proof}

\begin{Lemma}
\label{BasicProperties: L} If $\pair{R}{f}{\lambda}$ is $F$-pure, then so is $\pair{R}{f}{\error}$ for every $0 \leq \error \leq \lambda$.  Furthermore, $\pair{R}{f}{\lambda}$ is not $F$-pure if $\lambda > 1$.
\end{Lemma}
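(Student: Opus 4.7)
Both assertions should follow from short, direct arguments. For the first, my plan is monotonicity plus Lemma \ref{SmallerExponent: L}: since $0 \leq \error \leq \lambda$, the floor function yields $\down{(p^e-1)\error} \leq \down{(p^e-1)\lambda}$ for every $e \geq 1$. Using the splitting of $R \cdot f^{\down{(p^e-1)\lambda}/p^e} \subseteq \R{e}$ supplied by the $F$-purity of $\pair{R}{f}{\lambda}$, I would then invoke Lemma \ref{SmallerExponent: L} (applied with $N = \down{(p^e-1)\lambda}$ and $a = \down{(p^e-1)\error}$) to extract a splitting of $R \cdot f^{\down{(p^e-1)\error}/p^e} \subseteq \R{e}$, giving $F$-purity of $\pair{R}{f}{\error}$.

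For the second statement, the strategy is proof by contradiction: assume $\lambda > 1$ and that $\pair{R}{f}{\lambda}$ is $F$-pure. The first step is to observe that for $e$ sufficiently large, the exponent $N := \down{(p^e-1)\lambda}$ must be at least $p^e$. Concretely, the inequality $(p^e - 1)\lambda \geq p^e$ is equivalent to $p^e \geq 1 + 1/(\lambda - 1)$, and the right-hand side is a finite constant because $\lambda - 1 > 0$, so any sufficiently large $e$ will do. For such an $e$, writing $N = p^e + s$ with $s \geq 0$ lets me factor $f^{N/p^e}$ as $f \cdot f^{s/p^e}$ inside $\R{e}$. Any splitting $\theta \in \hom{e}$ sending $f^{N/p^e}$ to $1$ would then yield $1 = f \cdot \theta(f^{s/p^e}) \in (f)$ by $R$-linearity, forcing $f$ to be a unit in $R$ and contradicting the standing hypothesis that $f$ is a non-unit.

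Neither step looks to be a serious obstacle; the only point of care will be selecting $e$ in the second part, a choice dictated entirely by the strict inequality $\lambda > 1$.
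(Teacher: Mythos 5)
Your proposal is correct and follows essentially the same route as the paper: monotonicity of the floor function plus Lemma \ref{SmallerExponent: L} for the first claim, and for the second, choosing $e$ large enough that $\down{(p^e-1)\lambda} \geq p^e$ and extracting $1 \in (f)$ from a putative splitting. The only cosmetic difference is that the paper invokes Lemma \ref{SmallerExponent: L} again to reduce the exponent to exactly $p^e$ before factoring out $f$, whereas you factor $f$ out of $f^{N/p^e}$ directly; both yield the same contradiction with $f$ being a non-unit.
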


\begin{proof} The pair $\pair{R}{f}{0}$ is $F$-pure as $R$ is $F$-pure, and applying  Lemma \ref{SmallerExponent: L} to the inequality $\down{\(p^e-1\) \error} \leq \down{ (p^e-1) \lambda}$ shows that $\pair{R}{f}{\error}$ is $F$-pure whenever $\pair{R}{f}{\lambda}$ is $F$-pure.  For the second assertion, suppose that $\pair{R}{f}{\lambda}$ is $F$-pure with $\lambda = 1 + \error$ for some $\error > 0$.  By definition, we have that \begin{equation} \label{<1: e} R \cdot f^{\down{ \( p^e-1 \) (1 + \error)}/p^e} \subseteq \R{e} \text{ splits over $R$ for every  } e \geq 1. 
\end{equation}    

For $e \gg 0, (p^e-1) (1 + \error) = p^e-1 + ( p^e-1 ) \cdot \error > p^e$ for $e \gg 0$.  Applying Lemma \ref{SmallerExponent: L} to \rref{<1: e}  then shows that $R \cdot f^{p^e/p^e} = R \cdot f \subseteq \R{e}$ splits over $R$ for $e \gg 0$.   We conclude that there exists a map $\theta \in \hom{e}$ with $1 = \theta(f) = f \cdot \theta(1)$, contradicting the assumption that $f$ is not a unit.
\end{proof}

Of course, one can replace $F$-purity with strong (respectively, sharp) $F$-purity in Lemma \ref{BasicProperties: L} to obtain analogous statements.  Lemma \ref{BasicProperties: L} shows that $F$-purity for a given a parameter implies $F$-purity for all smaller parameters, and so one may ask:   What is the largest parameter for which a pair is $F$-pure?  This leads to the notion of $F$-pure thresholds, which were first defined in \cite{TW2004}.  It was shown in \cite[Proposition 2.2]{TW2004} (respectively, \cite[Proposition 5.3]{Schwede2008}) that the $F$-pure threshold agrees with the ``strongly $F$-pure threshold''  (respectively,  ``sharply $F$-pure threshold'').  We gather these facts in Definition \ref{fpt: D2}.

\begin{Definition} 
\label{fpt: D2} The following supremums all agree, and we call their common value the \emph{$F$-pure threshold of $f$}, denoted $\fpt{R,f}$:
\begin{align*} \fpt{R,f} := \sup_{\lambda} \set{ \pair{R}{f}{\lambda} \text{ is $F$-pure} }  = &  \sup_{\lambda} \set{ \pair{R}{f}{\lambda} \text{ is strongly $F$-pure} } \\ = &  \sup_{\lambda} \set{ \pair{R}{f}{\lambda} \text{ is sharply $F$-pure} }. 
\end{align*}
We often suppress the ambient ring and write $\fpt{f}$ instead of $\fpt{R,f}$.
\end{Definition}

As a corollary of Lemma \ref{BasicProperties: L}, we see that $\fpt{R,f} \in [0,1]$.  
\begin{Remark}
\label{FT>0Remark} Note that $\fpt{f} > 0$ if and only if there exists $e \geq 1$ such that $R \cdot f^{1/p^e} \subseteq \R{e}$ splits over $R$.  One recognizes that $\fpt{f} > 0$ if $R$ is a \emph{strongly $F$-regular} domain \cite{HH1990}.
\end{Remark}

\begin{Remark}
The issue of whether one can replace ``$\sup$'' with $``\max$'' in Definition \ref{fpt: D2} is precisely the content of Theorem  \ref{MainTheorem}.
\end{Remark}






\section{Some remarks on base $p$ expansions}

 In this section, we will consider the base $p$ expansions of real numbers contained in the unit interval.  By Lemma \ref{BasicProperties: L},  $\fpt{f}$ is such a number, and in the next section we will use the language developed here to obtain properties of $f$ via $\fpt{f}$.

\begin{Definition}
\label{ExpansionDefinition}  If $\alpha \in (0,1]$, we call the (unique) expression $\alpha = \sum_{e \geq 1} \frac{a_e}{p^e}$ with the property that the integers $0 \leq a_e \leq p-1$ are not all eventually zero the \emph{non-terminating base $p$ expansion of $\alpha$}, and we call $a_e$ the \emph{$e^{\th}$ digit} of $\alpha$.  \end{Definition}

The adjective ``non-terminating'' in Definition \ref{ExpansionDefinition} is only necessary when $\alpha$ is a rational number with denominator a power of $p$.  For example, $\frac{0}{p} + \sum_{e \geq 2} \frac{p-1}{p^e}$ is the the non-terminating base $p$ expansion of $\frac{1}{p}$.

\begin{Definition}
\label{TruncationTailDefinition}

Consider $\alpha \in (0,1]$ with non-terminating base $p$ expansion $\alpha = \sum_{d \geq 1} \frac{a_d}{p^d}$. 

\begin{enumerate}  
\item We define the \emph{$e^{\th}$ truncation of $\alpha$} by $\tr{\alpha}{e}  :=  \frac{a_1}{p} + \cdots + \frac{a_e}{p^e}$.  By convention, $\tr{0}{e} = 0$.
\item We define the \emph{$e^{\th}$ tail of $\alpha$} by $\tail{\alpha}{e}:= \alpha - \tr{\alpha}{e} = \sum_{d \geq e+1} \frac{a_d}{p^d}$.  By convention, $\tail{0}{e} = 0$.
\end{enumerate}
\end{Definition}

\begin{UHypothesis} All truncations and tails will be taken with respect to some fixed prime $p$ (which will always be the characteristic of any ambient ring in context). 
\end{UHypothesis}

\begin{Lemma}
\label{TruncationTailLemma}  Let $\alpha \in (0,1]$.  Then the following hold:
\begin{enumerate}
\item $\tr{\alpha}{e} \in \frac{1}{p^e} \cdot \mathbb{N}$.
\item $\tr{\alpha}{e} < \alpha$ and $\tail{\alpha}{e} > 0$ for all $e$.
\item $\tail{\alpha}{e} \leq \frac{1}{p^e}$, with equality if and only if $\alpha \in \frac{1}{p^e} \cdot \mathbb{N}$.  
\end{enumerate}
\end{Lemma}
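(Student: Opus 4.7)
The plan is to prove the three items in order, leveraging the non-terminating convention in Definition \ref{ExpansionDefinition} only where it is essential, namely in parts (2) and the equality case of (3). Throughout, write the non-terminating base $p$ expansion of $\alpha$ as $\alpha = \sum_{d \geq 1} a_d / p^d$, and recall $\tr{\alpha}{e} + \tail{\alpha}{e} = \alpha$.

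For part (1), I would simply rewrite $\tr{\alpha}{e}$ by clearing denominators:
\[
\tr{\alpha}{e} \; = \; \frac{a_1 p^{e-1} + a_2 p^{e-2} + \cdots + a_e}{p^e},
\]
whose numerator is a non-negative integer since each $a_d \geq 0$. For part (2), the non-terminating hypothesis supplies some index $d > e$ with $a_d \neq 0$, whence $\tail{\alpha}{e} = \sum_{d \geq e+1} a_d/p^d > 0$; the inequality $\tr{\alpha}{e} < \alpha$ follows immediately from $\alpha - \tr{\alpha}{e} = \tail{\alpha}{e}$.

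For the inequality in part (3), I would bound the tail digit-wise against $p-1$ and sum the resulting geometric series:
\[
\tail{\alpha}{e} \; \leq \; \sum_{d \geq e+1} \frac{p-1}{p^d} \; = \; \frac{1}{p^e}.
\]
The genuine content of the lemma is the equality characterization. For the forward implication, equality in the display above forces $a_d = p-1$ for every $d > e$, so $\tail{\alpha}{e} = 1/p^e$ and hence $\alpha = \tr{\alpha}{e} + 1/p^e$, which lies in $\frac{1}{p^e} \cdot \mathbb{N}$ by part (1). For the converse, if $\alpha \in \frac{1}{p^e} \cdot \mathbb{N}$, then $\tail{\alpha}{e} = \alpha - \tr{\alpha}{e}$ is a difference of elements of $\frac{1}{p^e} \cdot \mathbb{Z}$ (using part (1)), hence lies in $\frac{1}{p^e} \cdot \mathbb{Z}$; combined with the strict positivity from part (2) and the upper bound of $1/p^e$ already proved, this pins $\tail{\alpha}{e}$ down to exactly $1/p^e$.

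The only subtle step is the converse direction of (3): one must exclude the terminating expansion, and the mechanism for this is precisely the non-terminating convention used in part (2), which keeps $\tail{\alpha}{e}$ strictly positive. Once that positivity is in hand, the integrality argument traps $\tail{\alpha}{e}$ between the only two admissible values $1/p^e$ and anything strictly below, and the bound from the geometric sum rules out the latter. Everything else is a routine manipulation of base $p$ digits.
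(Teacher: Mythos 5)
Your proof is correct and is precisely the routine digit-by-digit argument the paper has in mind when it writes that the claims ``follow easily from the definitions'' and leaves the details to the reader; in particular you correctly isolate the one place where the non-terminating convention matters (the strict positivity of the tail, which drives both part (2) and the converse direction of (3)). Nothing to add.
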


\begin{proof} These all follow easily from the definitions, keeping in mind that we are dealing with objects derived from non-terminating expansions.  The details are left to the reader.
\end{proof}
 
 \begin{Lemma}
\label{RoundingLemma}
Let $\alpha \in (0,1]$.  Then the following hold:
\begin{enumerate}
\item $\up{ p^e \alpha } = p^e \tr{\alpha}{e} + 1$.
\item $p^e \tr{\alpha}{e} -1 \leq \down{ (p^e-1) \alpha} \leq p^e \tr{\alpha}{e}$.
\item $p^e \tr{\alpha}{e} \leq \up{(p^e-1) \alpha}  \leq p^e \tr{\alpha}{e} + 1$.
\item If $\beta \in [0,1] \cap \frac{1}{p^e} \cdot \mathbb{N}$ and $\alpha > \beta$, then $\tr{\alpha}{e} \geq \beta$.
\end{enumerate}

\end{Lemma}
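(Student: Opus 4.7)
The plan is to reduce each of the four statements to the basic decomposition $\alpha = \tr{\alpha}{e} + \tail{\alpha}{e}$ combined with the facts from Lemma \ref{TruncationTailLemma}: $p^e \tr{\alpha}{e} \in \mathbb{N}$, $0 < \tail{\alpha}{e} \leq \frac{1}{p^e}$, and of course $\alpha \in (0,1]$. With these in hand, each part becomes a short calculation of the floor or ceiling of an expression of the form ``integer plus a small real offset.''

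For part (1), I would multiply the decomposition through by $p^e$ to obtain $p^e \alpha = p^e \tr{\alpha}{e} + p^e \tail{\alpha}{e}$; since $p^e \tr{\alpha}{e} \in \mathbb{N}$ and $p^e \tail{\alpha}{e} \in (0,1]$, the ceiling is forced to equal $p^e \tr{\alpha}{e} + 1$. For parts (2) and (3), I would rewrite $(p^e-1)\alpha = p^e \tr{\alpha}{e} + (p^e \tail{\alpha}{e} - \alpha)$ and analyze the offset $p^e\tail{\alpha}{e} - \alpha$: the upper bound $<1$ follows from $p^e \tail{\alpha}{e} \leq 1$ together with $\alpha > 0$, while the lower bound $>-1$ follows from $p^e\tail{\alpha}{e} > 0$ together with $\alpha \leq 1$. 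Since $p^e\tr{\alpha}{e} \pm 1$ are integers, these strict bounds on the offset translate directly into the floor inequalities of (2) and, via the criterion that $\up{x} \geq n$ iff $x > n-1$ for integer $n$, into the ceiling inequalities of (3).

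For part (4), the key observation is that
\[
\tr{\alpha}{e} \;=\; \alpha - \tail{\alpha}{e} \;\geq\; \alpha - \tfrac{1}{p^e} \;>\; \beta - \tfrac{1}{p^e},
\]
where the first inequality uses $\tail{\alpha}{e} \leq 1/p^e$ and the second is the hypothesis $\alpha > \beta$. Since both $\tr{\alpha}{e}$ and $\beta$ lie in $\tfrac{1}{p^e} \mathbb{N}$, such a strict inequality is only possible when $\tr{\alpha}{e} \geq \beta$.

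The only point requiring any care, and therefore the closest thing to an obstacle, is tracking strict versus non-strict inequalities in parts (2) and (3), particularly at the boundary case $\alpha = 1$, whose non-terminating expansion is $\sum_{d \geq 1} (p-1)/p^d$ and so satisfies $\tail{1}{e} = 1/p^e$. Because Lemma \ref{TruncationTailLemma} guarantees $\tail{\alpha}{e} > 0$ unconditionally, both sides of the offset bound are strict, and the four parts follow uniformly without need for any case analysis.
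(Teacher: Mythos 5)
Your proof is correct and follows essentially the same route as the paper: the decomposition $p^e\alpha = p^e\tr{\alpha}{e} + p^e\tail{\alpha}{e}$ for part (1), the rewriting $(p^e-1)\alpha = p^e\tr{\alpha}{e} + (p^e\tail{\alpha}{e} - \alpha)$ with the offset bounded strictly between $-1$ and $1$ for parts (2) and (3), and the integrality of $p^e\tr{\alpha}{e}$ and $p^e\beta$ for part (4). No gaps.
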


\begin{proof}
By definition, we have that \begin{equation} \label{RLeq1} p^e \alpha = p^e \tr{\alpha}{e} + p^e \tail{\alpha}{e}. \end{equation}  By Lemma \ref{TruncationTailLemma}, the first summand in \eqref{RLeq1} is an integer, while the second is contained in $(0,1]$. The first point follows.  Substituting the decomposition of $p^e \alpha$ appearing in \eqref{RLeq1} into the expression $(p^e -1 ) \cdot \alpha$ shows that  \begin{equation} \label{RLeq2} (p^e-1) \cdot \alpha = p^e \tr{\alpha}{e} + p^e \tail{\alpha}{e} - \alpha.\end{equation}  By Lemma \ref{TruncationTailLemma}, both $\alpha$ and $p^e \tail{\alpha}{e}$ are contained in $(0,1]$, so that $ | p^e \tail{\alpha}{e} - \alpha |  < 1$.  Thus, the second and third  points follow from \eqref{RLeq2}.  We now prove the last point.  By Lemma \ref{TruncationTailLemma}, we have that $\frac{1}{p^e} + \tr{\alpha}{e} \geq \alpha > \beta$, and multiplying by $p^e$ shows that \begin{equation} \label{rounding: e} 1 + p^e \tr{\alpha}{e} > p^e \beta.\end{equation}  By assumption, both sides of \ref{rounding: e} are integers, and we conclude that $p^e \tr{\alpha}{e} \geq p^e \beta$.
 \end{proof}

\begin{Lemma}
\label{RepeatingExpansion: L}  Let $\alpha \in (0,1]$.  If $(p^{d} - 1 ) \cdot \alpha \in \mathbb{N}$, then $\tr{\alpha}{e d+d} =  \tr{\alpha}{e d} + \frac{1}{p^{e d}} \cdot \tr{\alpha}{d}$  for all $e \geq 1$.
\end{Lemma}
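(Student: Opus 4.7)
The plan is to show that the non-terminating base $p$ expansion of $\alpha$ is purely periodic with period $d$; the claimed identity then follows immediately by comparing digits. The hypothesis $(p^d-1)\alpha \in \mathbb{N}$ together with $\alpha \in (0,1]$ gives us an integer $N$ with $1 \leq N \leq p^d-1$ and $\alpha = N/(p^d-1)$. The main geometric series identity to exploit is
\[
\alpha \,=\, \frac{N}{p^d-1} \,=\, \frac{N/p^d}{1 - 1/p^d} \,=\, \sum_{k=1}^{\infty} \frac{N}{p^{kd}},
\]
whose rearrangement will produce an explicit periodic expansion.

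Next I would write $N$ in base $p$ using exactly $d$ digits: $N = a_1 p^{d-1} + a_2 p^{d-2} + \cdots + a_d$ with $0 \leq a_j \leq p-1$ (allowing leading zeros if $N < p^{d-1}$). Substituting into the series above and regrouping term by term yields
\[
\alpha \,=\, \sum_{k=0}^{\infty} \sum_{j=1}^{d} \frac{a_j}{p^{kd+j}} \,=\, \sum_{i=1}^{\infty} \frac{b_i}{p^i}, \qquad \text{where } b_{kd+j} := a_j \text{ for } 1 \leq j \leq d,\ k \geq 0.
\]
This exhibits $\alpha$ as a sum $\sum b_i/p^i$ with digits $0 \leq b_i \leq p-1$ that are strictly periodic of period $d$.

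I then need to check that this expansion is the non-terminating one, so that I can legitimately identify $b_i$ with the $i^{\th}$ digit of $\alpha$ from Definition \ref{ExpansionDefinition}. Since $N \geq 1$, at least one $a_{j_0}$ is nonzero, and then $b_{kd+j_0} = a_{j_0} \neq 0$ for every $k \geq 0$, so the $b_i$ are not eventually zero. By the uniqueness statement built into Definition \ref{ExpansionDefinition}, these $b_i$ are exactly the non-terminating base $p$ digits of $\alpha$.

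With periodicity $b_{ed+j} = b_j$ in hand, the claimed formula drops out of a one-line digit comparison:
\[
\tr{\alpha}{ed+d} = \sum_{i=1}^{ed} \frac{b_i}{p^i} + \sum_{j=1}^{d} \frac{b_{ed+j}}{p^{ed+j}} = \tr{\alpha}{ed} + \frac{1}{p^{ed}}\sum_{j=1}^{d} \frac{b_j}{p^j} = \tr{\alpha}{ed} + \frac{1}{p^{ed}} \tr{\alpha}{d}.
\]
The only subtle point in the whole argument is the non-terminating bookkeeping: one must be careful in the boundary case $N = p^d-1$ (so $\alpha = 1$, all $a_j = p-1$, and the expansion is $0.\overline{(p-1)\cdots(p-1)}$) to see that we have produced the correct expansion rather than the terminating one. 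This is the step I expect to be mildly delicate, but it is handled automatically once one observes that the $b_i$ are never eventually zero.
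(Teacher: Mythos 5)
Your argument is correct and is precisely the route the paper takes: the paper's proof is a two-line sketch asserting that the hypothesis $(p^d-1)\alpha\in\mathbb{N}$ forces the non-terminating base $p$ digits to repeat with period $d$, from which the truncation identity follows by digit comparison, and it leaves the details to the reader. You have simply supplied those details (the geometric series for $N/(p^d-1)$, the $d$-digit base $p$ representation of $N$, and the check that the resulting periodic expansion is the non-terminating one), including the correct handling of the boundary case $\alpha=1$.
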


\begin{proof}  One can show that if $(p^d - 1) \cdot \alpha \in \mathbb{N}$, then the digits of the non-terminating base $p$ expansion of $\alpha$ must begin repeating after the $d^{\th}$ digit.  Once this has been established, it is easy to see that the $(ed + d)^{\th}$ truncation of $\alpha$ is determined by the $ed^{\th}$ and $d^{\th}$ truncations in precisely the prescribed way.  The details are left to the reader.\end{proof}

\section{Proof of the Key Lemma}

The aim of this section is to prove Key Lemma \ref{FTTruncationLemma}, which says that all of the relevant splitting data for $f$ is encoded by the truncations of $\fpt{f}$.  The process of taking roots of maps is key to the proof of Key Lemma \ref{FTTruncationLemma}, so we isolate it in Definition \ref{rootofamap: D}.

\begin{Definition}
\label{rootofamap: D}
An $R$-linear map $\theta: \R{e} \to R$ gives rise, in a natural way, to an $\R{d}$-linear map $\theta^{1/p^{\dee}}: \R{e + \dee} \to \R{\dee}$ defined by the rule $\theta^{1/p^{\dee}} (r^{1/p^{e + \dee}}) := \theta(r^{1/p^e})^{1/p^{\dee}}$.  We call $\theta^{1/p^{\dee}}$ the \emph{$(p^{\dee})^{th}$-root of $\theta$}.
\end{Definition}

\begin{Remark}
\label{lastsecond: R}
As $R$ is always assumed to be $F$-pure, $R \subseteq \R{}$ splits over $R$.  By taking $(p^e)^{\th}$ roots of this splitting, we see that $\R{e} \subseteq \R{e+1}$ splits over $\R{e}$, and a slight modification of this argument shows that $\R{e} \subseteq \R{d}$ splits over $\R{e}$ for every $d > e$. 
\end{Remark}

\begin{Lemma}
\label{SplittingLemma}
Let $\alpha \in [0,1] \cap \frac{1}{p^{\dee}} \cdot \mathbb{N}$. If the inclusion $R \cdot f^{\up{p^e \alpha}/p^e} \subseteq \R{e}$ splits over $R$ for some $e \geq 1$, then so must the inclusion $R \cdot f^{\alpha} \subseteq \R{\dee}$.
\end{Lemma}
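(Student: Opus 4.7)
The plan is to transfer the given splitting at level $e$ into the required splitting at level $\dee$. Write $\alpha = k/p^{\dee}$ with $k \in \mathbb{N}$ and fix an $R$-linear splitting $\theta \in \Hom_R(\R{e}, R)$ witnessing the hypothesis, so that $\theta(f^{\up{p^e \alpha}/p^e}) = 1$. The argument naturally breaks into the cases $e \geq \dee$ and $e < \dee$.

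In the case $e \geq \dee$, the quantity $p^e \alpha = k p^{e-\dee}$ is an integer, so $\up{p^e \alpha}/p^e = \alpha$, and therefore $\theta(f^\alpha) = 1$. Since $f^\alpha \in \R{\dee} \subseteq \R{e}$, the restriction $\theta|_{\R{\dee}}$ is an $R$-linear map $\R{\dee} \to R$ still sending $f^\alpha$ to $1$, producing the desired splitting immediately.

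In the case $e < \dee$, I would invoke Remark \ref{lastsecond: R} to produce an $\R{e}$-linear splitting $\sigma \colon \R{\dee} \to \R{e}$ of the inclusion $\R{e} \subseteq \R{\dee}$. Setting $N := \up{p^e \alpha}$, the composition $\theta \circ \sigma \colon \R{\dee} \to R$ is $R$-linear and satisfies $(\theta \circ \sigma)(f^{N/p^e}) = \theta(f^{N/p^e}) = 1$, because $f^{N/p^e}$ lies in $\R{e}$ and is therefore fixed by $\sigma$. Rewriting $f^{N/p^e} = f^{N p^{\dee-e}/p^{\dee}}$, this shows that $R \cdot f^{N p^{\dee-e}/p^{\dee}} \subseteq \R{\dee}$ splits over $R$. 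Since $N = \up{p^e \alpha} \geq p^e \alpha = k/p^{\dee-e}$ yields $N p^{\dee-e} \geq k$, Lemma \ref{SmallerExponent: L} then reduces this to a splitting of $R \cdot f^{k/p^{\dee}} = R \cdot f^{\alpha} \subseteq \R{\dee}$, completing the proof.

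The only real obstacle is the bookkeeping of exponents between the two levels: one must correctly reinterpret $N/p^e$ as $N p^{\dee-e}/p^{\dee}$ and verify the inequality $N p^{\dee-e} \geq k$ by using that $\up{\cdot}$ can only round up. Once those observations are in place, the proof becomes a clean combination of Remark \ref{lastsecond: R} (to move between levels) with Lemma \ref{SmallerExponent: L} (to lower the exponent down to $k$).
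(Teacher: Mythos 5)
Your proof is correct and follows essentially the same route as the paper's: the same case split on $e \geq \dee$ versus $e < \dee$, the same use of Remark \ref{lastsecond: R} to produce an $\R{e}$-linear retraction $\R{\dee} \to \R{e}$ fixing $f^{\up{p^e\alpha}/p^e}$, and the same final application of Lemma \ref{SmallerExponent: L} via the inequality $\up{p^e\alpha}\,p^{\dee-e} \geq p^{\dee}\alpha$. The only difference is cosmetic ordering of the two reduction steps.
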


\begin{proof} Fix $\theta \in \hom{e}$ with the property that  $\theta(f^{\up{p^e \alpha}/p^e}) =1$. If $e \geq \dee$, then $\alpha \in \frac{1}{p^{\dee}} \cdot \mathbb{N} \subseteq \frac{1}{p^e} \cdot \mathbb{N}$, and so $f^{\alpha} \in \R{\dee} \subseteq \R{e}$.  It is then clear that $f^{\up{p^e \alpha}/p^e} = f^{\alpha}$ maps to $1$ under the composition $\R{\dee} \subseteq \R{e} \stackrel{\theta}{\longrightarrow} R$.

Instead, suppose that $\dee > e$, so that $\R{e} \subseteq \R{\dee}$.    Note that $p^e \alpha \leq \up{p^e \alpha}$, so $\alpha \leq \up{p^e \alpha} / p^e$.  By Lemma \ref{SmallerExponent: L}, it suffices to show that there exists an $R$-linear map $\R{\dee} \to R$ sending $f^{\up{p^e \alpha}/p^e}$ to $1$.  By Remark \ref{lastsecond: R}, there exists an $\R{e}$-linear map $\phi: \R{\dee} \to \R{e}$ with $\phi(1)=1$.  Note that the $\R{e}$-linearity of $\phi$ implies that $\phi \( f^{\up{p^e \alpha}/p^e} \) = f^{\up{p^e \alpha}/p^e}$.  Thus, if  $\Theta$ denotes the composition $\R{\dee} \stackrel{\phi}{\to} \R{e} \stackrel{\theta}{\longrightarrow} R$, then $\Theta \( f^{ \up{p^e \alpha}/p^e} \) = 1$.
\end{proof}

\begin{KeyLemma}
\label{FTTruncationLemma} $p^{\dee} \tr{\fpt{f}}{\dee} = \max \set{ a \in \mathbb{N} : R \cdot f^{a/p^{\dee}} \subseteq \R{\dee} \text{ splits over $R$}}$. 
\end{KeyLemma}

\begin{proof} Set $\lambda:=\fpt{f}$ and $\nu_f(p^{\dee}):=\max \left \{ a \in \mathbb{N}: R \cdot f^{a/p^{\dee}} \subseteq \R{\dee} \text{ splits over $R$ } \right \}$.  As $R$ is $F$-pure and $f$ is a non-unit of $R$, we have that $0 \leq \nu_f(p^{\dee}) \leq p^{\dee}-1$.  If $\lambda = 0$, then $\nu_f(p^{\dee}) = 0$, and there is nothing to prove.  We will now assume that $\lambda \in (0,1]$.  By Lemma \ref{TruncationTailLemma},  $\tr{\lambda}{\dee} < \lambda$, so it follows from Definition \ref{fpt: D2} that $\pair{R}{f}{\tr{\lambda}{\dee}}$ is strongly $F$-pure, and hence there exists $e \geq 1$ such that $R \cdot f^{\up{p^e \tr{\lambda}{\dee}}/p^e} \subseteq \R{e}$ splits. By Lemma \ref{SplittingLemma}, we may assume that $e = \dee$, and it follows that $p^{\dee} \tr{\lambda}{\dee}~\leq~\nu_f(p^{\dee})$.  

We now show that the inclusion $R \cdot f^{\tr{\lambda}{\dee} +\frac{1}{p^{\dee}}} \subseteq \R{\dee}$ \emph{never} splits over $R$.  By Lemma \ref{RoundingLemma}, $p^{\dee} \tr{\lambda}{\dee} + 1 = \up{p^{\dee} \lambda}$,  so it suffices to show that $\theta\(f^{\up{p^{\dee}\lambda}/p^{\dee}} \) \neq 1$ for every $\theta \in \hom{\dee}$.  If $\lambda \notin \frac{1}{p^{\dee}} \cdot \mathbb{N}$, then $\up{p^{\dee} \lambda} = \up{p^{\dee} (\lambda + \varepsilon)}$ for $0 < \varepsilon \ll 1$.  By Definition \ref{fpt: D2}, $\lambda$ is also the ``strongly $F$-pure threshold,''  and it follows that $\theta\(f^{\up{p^{\dee} \lambda}/p^{\dee}}\) = \theta\(f^{\up{p^{\dee} (\lambda + \varepsilon)}/p^{\dee}} \) \neq 1$ for every $\theta \in \hom{\dee}$.

Now, suppose that $\lambda \in \frac{1}{p^{\dee}} \cdot \mathbb{N}$, so that $\up{p^{\dee} \lambda} / p^{\dee} = \lambda$.  By way of contradiction, suppose that $\theta \( f^{\lambda} \) = 1$ for some $\theta \in \hom{\dee}$.   Note that  $0 \neq \lambda \geq \frac{1}{p^{\dee}}$, so by Lemma \ref{SmallerExponent: L}, there exists an $R$-linear map $\R{\dee} \to R$ sending $f^{1/p^{\dee}}$ to $1$.  Taking $(p^{\dee})^{\th}$ roots of this map produces an $\R{\dee}$-linear map $\phi:\R{2\dee} \to \R{\dee}$ with the property that $\phi(f^{1/p^{2\dee}})=1$. Under the composition $\R{2 \dee} \stackrel{\phi}{\longrightarrow} \R{\dee} \stackrel{\theta}{\longrightarrow} R$, it follows from the $\R{\dee}$-linearity of $\phi$ that \[ f^{\lambda + \frac{1}{p^{2\dee}}} = f^{\lambda} \cdot f^{1/p^{2\dee}} \mapsto f^{\lambda} \cdot \phi(f^{1/p^{2\dee}}) = f^{\lambda} \cdot 1 \mapsto \theta( f^{\lambda}) = 1.\]  We see that $R \cdot f^{\lambda +\frac{1}{p^{2\dee}}} \subseteq \R{ 2\dee}$ splits, contradicting the definition of $\lambda =\fpt{f}$.
\end{proof}

 \begin{Remark}
\label{AlgorithmRemark}  Key Lemma \ref{FTTruncationLemma} allows us to give a definition of $\fpt{f}$ which makes no reference to $F$-purity of pairs, which we now briefly describe.  Set  \[ a_1 = \max \{ a \in \mathbb{N}:  R \cdot f^{a/p} \subseteq \R{} \text{ splits} \}.\]  As $R$ is $F$-pure and $f$ is a non-unit, $0 \leq a_1 \leq p-1$.  Inductively, define \[ a_{e+1} = \max \{ a \in \mathbb{N}: R \cdot f^{a_1/p + \cdots + a_e/p^e + a/p^{e+1}} \subseteq \R{e+1} \text{ splits  as a map of $R$-modules} \}.\]  Again, one may verify that $0 \leq a_e \leq p-1$.  Furthermore, by taking $p^{th}$ roots of maps as in the proof of Key Lemma \ref{FTTruncationLemma}, one may show that the $a_e$ are not all eventually zero as long as one $a_e \neq 0$ (i.e., $\fpt{f} \neq 0$).   In this case, Key Lemma \ref{FTTruncationLemma} tells us that $\fpt{f} = \sum_{e \geq 1} \frac{a_e}{p^e}$ is the non-terminating base $p$ expansion of $\fpt{f}$.
\end{Remark}

  \subsection{Some consequences of Key Lemma \ref{FTTruncationLemma}}
 
In this subsection, we gather some corollaries of Key Lemma \ref{FTTruncationLemma} that we will utilize in later sections.

\begin{Lemma}
\label{OneConditionSplittingLemma}
Let $\alpha \in [0,1]$ with $(p^{\dee}-1)\cdot \alpha \in \mathbb{N}$ . If $R \cdot f^{\tr{\alpha}{\dee}} \subseteq \R{\dee}$ splits over $R$, then so does $R \cdot f^{ \tr{\alpha}{e \dee}} \subseteq \R{e \dee}$ for every $e \geq 1$. 
\end{Lemma}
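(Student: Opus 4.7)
The plan is to induct on $e$, with the case $e=1$ being exactly the hypothesis. For the inductive step, I would use Lemma \ref{RepeatingExpansion: L} to write
\[ \tr{\alpha}{(e+1)d} = \tr{\alpha}{ed} + \frac{1}{p^{ed}}\tr{\alpha}{d}, \]
which, upon exponentiating $f$, gives the factorization $f^{\tr{\alpha}{(e+1)d}} = f^{\tr{\alpha}{ed}} \cdot f^{\tr{\alpha}{d}/p^{ed}}$. The point of this decomposition is that the first factor lives in $\R{ed}$ (so the inductive hypothesis applies to it directly), while the second factor lies in $\R{(e+1)d}$ and is reached by taking a $(p^{ed})^{\text{th}}$ root of $f^{\tr{\alpha}{d}} \in \R{d}$.

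Next, I would feed the given splitting $\theta : \R{d} \to R$ with $\theta(f^{\tr{\alpha}{d}}) = 1$ into the machinery of Definition \ref{rootofamap: D}. The $(p^{ed})^{\text{th}}$ root $\theta^{1/p^{ed}} : \R{(e+1)d} \to \R{ed}$ is $\R{ed}$-linear and, by the defining rule for roots, satisfies $\theta^{1/p^{ed}}\bigl(f^{\tr{\alpha}{d}/p^{ed}}\bigr) = \theta(f^{\tr{\alpha}{d}})^{1/p^{ed}} = 1$. Then I would compose with the map $\psi : \R{ed} \to R$ provided by the inductive hypothesis, which sends $f^{\tr{\alpha}{ed}} \mapsto 1$.

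Finally, I would verify that $\psi \circ \theta^{1/p^{ed}}$ does the job: since $f^{\tr{\alpha}{ed}} \in \R{ed}$, the $\R{ed}$-linearity of $\theta^{1/p^{ed}}$ lets me pull this factor through the map, giving
\[ \theta^{1/p^{ed}}\bigl(f^{\tr{\alpha}{(e+1)d}}\bigr) = f^{\tr{\alpha}{ed}} \cdot \theta^{1/p^{ed}}\bigl(f^{\tr{\alpha}{d}/p^{ed}}\bigr) = f^{\tr{\alpha}{ed}}, \]
which $\psi$ then sends to $1$, completing the inductive step.

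The only subtlety — the place where I would be most careful — is in lining up the exponents and the source/target of $\theta^{1/p^{ed}}$ so that $f^{\tr{\alpha}{d}/p^{ed}}$ really is the image under taking $(p^{ed})^{\text{th}}$ roots of $f^{\tr{\alpha}{d}}$; this amounts to writing $\tr{\alpha}{d} = m/p^d$ with $m \in \mathbb{N}$ (using the integrality from Lemma \ref{TruncationTailLemma}) and tracking that $(f^m)^{1/p^d}$ has $(p^{ed})^{\text{th}}$ root equal to $f^{m/p^{(e+1)d}}$. Once that bookkeeping is in order, the proof is essentially a one-line composition argument.
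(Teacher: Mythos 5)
Your proposal is correct and follows the paper's proof essentially verbatim: the same induction on $e$, the same use of Lemma \ref{RepeatingExpansion: L} to factor $f^{\tr{\alpha}{(e+1)\dee}}$, the same $(p^{e\dee})^{\th}$ root of the hypothesized splitting composed with the map from the inductive hypothesis, and the same appeal to $\R{e\dee}$-linearity to pull out the factor $f^{\tr{\alpha}{e\dee}}$. The bookkeeping you flag at the end is exactly the content of Definition \ref{rootofamap: D} and is handled the same way in the paper.
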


\begin{proof} We induce on $e$, the base case being our hypothesis. Suppose $R \cdot f^{\tr{\alpha}{\dee}} \subseteq \R{\dee}$ and $R \cdot f^{\tr{\alpha}{e \dee}} \subseteq \R{e \dee}$ split as maps of $R$-modules, so that there exist
\begin{enumerate}
\item \label{split1} an $R$-linear map $\R{\dee} \to R$ with $f^{\tr{\alpha}{\dee}} \mapsto 1$, and 
\item \label{split2} an $R$-linear map $\theta: \R{e \dee} \to R$ with $\theta(f^{\tr{\alpha}{e \dee}}) = 1$.
\suspend{enumerate}

\noindent We now show that $R \cdot f^{\tr{\alpha}{e \dee+\dee}} \subseteq \R{e \dee + \dee}$ splits as a map of $R$-modules.  By taking $(p^{e \dee})^{\th}$-roots of the map in \eqref{split1}, we obtain \
 
 \resume{enumerate} 
 \item \label{split 3} an $\R{e \dee}$-linear map $\phi: \R{e \dee + \dee} \to \R{e \dee}$ with $\phi \left(f^{ \frac{ \tr{\alpha}{\dee}}{p^{e \dee}}} \right)=1$.
  \end{enumerate}

\noindent By Lemma \ref{RepeatingExpansion: L}, we have that  $\tr{\alpha}{e \dee+\dee} =  \tr{\alpha}{e \dee} + \frac{\tr{\alpha}{\dee}}{p^{e \dee}}$  for all $e \geq 1$, and it follows that \begin{equation} \label{product} f^{\tr{\alpha}{e \dee+\dee}} =   f^{\frac{\tr{\alpha}{\dee}}{p^{e\dee}}} \cdot  f^{\tr{\alpha}{e\dee}}. \end{equation}  

\noindent Under the composition $\R{e \dee + \dee} \stackrel{\phi}{\longrightarrow} \R{e \dee} \stackrel{\theta}{\longrightarrow} R$, it follows from (\ref{product}) and the $\R{e \dee}$-linearity of $\phi$, that  $ f^{ \tr{\alpha}{e \dee + \dee} } = f^{\frac{\tr{\alpha}{\dee}}{p^{e\dee}}} \cdot  f^{\tr{\alpha}{e\dee}} \mapsto f^{\tr{\alpha}{e \dee}} \cdot \phi\left(f^{ \frac{ \tr{\alpha}{\dee}}{p^{e \dee}}} \right) = f^{\tr{\alpha}{e \dee}} \cdot 1 \mapsto \theta( f^{\tr{\alpha}{e \dee} }) = 1$. 
\end{proof}

\begin{Corollary}  
\label{OneConditionSplittingCorollary}
Let $\alpha \in [0,1]$ with $(p^{\dee}-1)\cdot \alpha \in \mathbb{N}$.  If $R \cdot f^{\tr{\alpha}{\dee}}  \subseteq \R{\dee}$ splits, then $\alpha \leq \fpt{f}$.
\end{Corollary}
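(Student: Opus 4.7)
The plan is to combine Lemma \ref{OneConditionSplittingLemma} with Key Lemma \ref{FTTruncationLemma} and pass to the limit. The hypothesis $(p^{\dee}-1)\cdot \alpha \in \mathbb{N}$ is exactly what is needed to apply Lemma \ref{OneConditionSplittingLemma}, so applying it to the given splitting at level $\dee$ yields that $R \cdot f^{\tr{\alpha}{e \dee}} \subseteq \R{e \dee}$ splits over $R$ for every $e \geq 1$.

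Next, I would invoke Key Lemma \ref{FTTruncationLemma} at the level $e \dee$: since $p^{e \dee} \tr{\alpha}{e \dee}$ is a non-negative integer and the inclusion $R \cdot f^{(p^{e\dee}\tr{\alpha}{e\dee})/p^{e\dee}} \subseteq \R{e\dee}$ splits, this integer is at most the maximum $p^{e\dee} \tr{\fpt{f}}{e\dee}$. Dividing by $p^{e\dee}$ gives the key inequality
\[
\tr{\alpha}{e\dee} \;\leq\; \tr{\fpt{f}}{e\dee} \qquad \text{for all } e \geq 1.
\]

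Finally, I would take the limit $e \to \infty$. By Lemma \ref{TruncationTailLemma}, $0 \leq \alpha - \tr{\alpha}{e\dee} = \tail{\alpha}{e\dee} \leq 1/p^{e\dee}$, and similarly for $\fpt{f}$, so the truncations converge monotonically to $\alpha$ and $\fpt{f}$ respectively. The inequality is preserved in the limit, yielding $\alpha \leq \fpt{f}$.

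I do not expect any real obstacle: the bookkeeping with non-terminating expansions is already handled by Lemma \ref{TruncationTailLemma} and Lemma \ref{RepeatingExpansion: L}, and the only substantive inputs — propagating a single splitting to all multiples of $\dee$, and extracting the truncations of $\fpt{f}$ from splitting data — are exactly the content of Lemma \ref{OneConditionSplittingLemma} and Key Lemma \ref{FTTruncationLemma}. One small care point is that Key Lemma \ref{FTTruncationLemma} requires $R$ to be $F$-pure and $f$ to be a non-unit, which is our standing hypothesis; the edge case $\alpha = 0$ is trivial since $\fpt{f} \geq 0$.
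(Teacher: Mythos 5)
Your proposal is correct and follows the paper's own proof essentially verbatim: apply Lemma \ref{OneConditionSplittingLemma} to propagate the splitting to all levels $e\dee$, use Key Lemma \ref{FTTruncationLemma} to deduce $\tr{\alpha}{e\dee} \leq \tr{\fpt{f}}{e\dee}$, and let $e \to \infty$. The extra remarks on integrality of $p^{e\dee}\tr{\alpha}{e\dee}$ and the $\alpha = 0$ edge case are fine but not points the paper dwells on.
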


\begin{proof}  By Lemma \ref{OneConditionSplittingLemma}, $R \cdot f^{\tr{\alpha}{e \dee}} \subseteq \R{e \dee}$ splits for all $e \geq 1$.  By Key Lemma \ref{FTTruncationLemma}, we have that $\tr{\alpha}{e \dee} \leq \tr{\fpt{f}}{e \dee}$ for every $e \geq 1$, and taking the limit as $e \to \infty$ gives the desired inequality. 
\end{proof}




\begin{Corollary}
$\fpt{f} = 1$ if and only if $R \cdot f^{(p-1)/p} \subseteq \R{}$ splits as a map of $R$-modules.  In particular, if  $R$ is $F$-finite, regular and local, then $R/(f)$ is $F$-pure if and only if $\fpt{f} =1$.
 \end{Corollary}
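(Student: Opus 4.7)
The plan is to reduce the first equivalence to a direct application of Key Lemma \ref{FTTruncationLemma} and Corollary \ref{OneConditionSplittingCorollary} at $\alpha = 1$. The crucial observation is that the non-terminating base $p$ expansion of $1$ is $\sum_{e \geq 1} \frac{p-1}{p^e}$, so $\tr{1}{1} = (p-1)/p$; this identifies the inclusion in the statement with $R \cdot f^{\tr{1}{1}} \subseteq \R{}$ and puts us squarely in the setting of the previous results.

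For the forward direction, I will assume $\fpt{f} = 1$ and apply Key Lemma \ref{FTTruncationLemma} with $\dee = 1$. This yields
\[ p \cdot \tr{1}{1} \; = \; p - 1 \; = \; \max\set{a \in \mathbb{N} : R \cdot f^{a/p} \subseteq \R{} \text{ splits over } R}, \]
so in particular $R \cdot f^{(p-1)/p} \subseteq \R{}$ splits. For the converse, I note that $(p-1) \cdot 1 = p-1 \in \mathbb{N}$, so Corollary \ref{OneConditionSplittingCorollary} applies with $\alpha = 1$ and $\dee = 1$: the splitting of $R \cdot f^{\tr{1}{1}} \subseteq \R{}$ forces $1 \leq \fpt{f}$, and combining with $\fpt{f} \leq 1$ from Lemma \ref{BasicProperties: L} gives $\fpt{f} = 1$.

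For the ``in particular'' claim, I will invoke Fedder's criterion: when $R$ is $F$-finite, regular, and local with maximal ideal $\mathfrak{m}$, the ring $R/(f)$ is $F$-pure if and only if $f^{p-1} \notin \mathfrak{m}^{[p]}$, and the same non-membership condition characterizes splitting of $R \cdot f^{(p-1)/p} \subseteq \R{}$ over $R$ (via the free-module structure of $\R{}$ and the surjection $\Hom_R(\R{}, R) \to R$ coming from evaluation). Combining with the already-established equivalence then yields the stated result. I do not anticipate any serious obstacle here, as both halves of the general equivalence are essentially bookkeeping on top of the Key Lemma and its Corollary; the only point requiring care is invoking the non-terminating convention, so that $\tr{1}{1} = (p-1)/p$ rather than $0$ (as the terminating expression $1 = 1/1$ might otherwise suggest).
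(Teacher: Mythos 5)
Your proposal is correct and follows essentially the same route as the paper: Key Lemma \ref{FTTruncationLemma} with $\dee=1$ for the forward direction, Corollary \ref{OneConditionSplittingCorollary} with $\alpha=\dee=1$ plus Lemma \ref{BasicProperties: L} for the converse, and Fedder's criterion together with the freeness of $\R{}$ over a regular $F$-finite local ring for the ``in particular'' statement (the paper phrases the last step via Nakayama's lemma, which is the same observation as your evaluation-surjection remark). Your emphasis on the non-terminating convention giving $\tr{1}{1}=(p-1)/p$ is exactly the point the paper relies on.
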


\begin{proof}
If $\fpt{f} = 1$, then $\tr{\fpt{f}}{1} = \frac{p-1}{p}$, and so $R \cdot f^{(p-1)/p} \subseteq \R{}$ splits over $R$ by Key Lemma \ref{FTTruncationLemma}.  On the other hand, if $R \cdot f^{(p-1)/p} \subseteq \R{}$ splits over $R$, then it follows from setting $\alpha = d = 1$ in Corollary \ref{OneConditionSplittingCorollary} shows that $\fpt{f} \geq 1$, while $\fpt{f} \leq 1$ by Lemma \ref{BasicProperties: L}.

For the second statement, let $\m$ denote the maximal ideal of $R$.  That $R/(f)$ is $F$-pure if and only if $f^{p-1} \notin \bracket{\m}{}$ is known as Fedder's Criteria \cite[Proposition 2.1]{Fed1983}.   On the other hand, $f^{p-1} \notin \bracket{\m}{}$ if and only if $f^{(p-1)/p} \notin \m \cdot \R{}$.  Since $R$ is regular and $F$-finite, $\R{}$ is a finitely generated free $R$-module \cite{Kunz}, and so Nakayama's lemma shows that $f^{(p-1)/p} \notin \m \cdot \R{}$ if and only if $R \cdot f^{(p-1)/p} \subseteq \R{}$ splits over $R$.
\end{proof}

\section{The main results}

In this section, we prove our main results, Theorem \ref{MainTheorem} and Proposition \ref{fptInterval: P}. 

\subsection{More remarks on base $p$ expansions}

We begin with a few straightforward lemmas regarding non-terminating base $p$ expansions that will be used later to impose conditions on the set of all $F$-pure thresholds. 

\begin{Definition}  
\label{Repeat: D}
If $\alpha \in [0,1]$, let $\repeat{\alpha}{e} = \sum \limits_{d \geq 0} \frac{\tr{\alpha}{e}}{p^{ed}} = \tr{\alpha}{e} \cdot \sum \limits_{d \geq 0} \frac{1}{p^{ed}} = \tr{\alpha}{e} \cdot \frac{p^e}{p^e-1}$.
\end{Definition}

\begin{Remark} 
\label{RepeatRemark}
Note that  $\repeat{\alpha}{e}$ is the rational number whose non-terminating base $p$ expansion is  obtained by ``repeating'' the first $e$ digits of the non-terminating base $p$ expansion of $\alpha$.
\end{Remark}

\begin{Lemma}  
\label{RepeatLemma}  If $\alpha \in [0,1]$, then the following hold: 
\begin{enumerate} 
\item $\tr{\repeat{\alpha}{e}}{e} = \tr{\alpha}{e}$. \vspace{.05in}
\item $p^e \tr{\alpha}{e} =  (p^e-1) \repeat{\alpha}{e}$.
\end{enumerate}
\end{Lemma}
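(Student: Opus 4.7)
The plan is to dispose of both items essentially by unwinding the definition of $\repeat{\alpha}{e}$ given in Definition \ref{Repeat: D}, which already presents it in two equivalent forms: a geometric series $\sum_{d \geq 0} \tr{\alpha}{e}/p^{ed}$ and a closed form $\tr{\alpha}{e} \cdot p^e/(p^e-1)$.

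For part (2), I would simply rearrange the closed-form expression. Since $\repeat{\alpha}{e} = \tr{\alpha}{e} \cdot \frac{p^e}{p^e-1}$ by definition, multiplying both sides by $p^e-1$ immediately yields $(p^e-1)\repeat{\alpha}{e} = p^e \tr{\alpha}{e}$. This is a one-line verification requiring no input beyond the definition.

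For part (1), I would use the other half of Definition \ref{Repeat: D}, namely $\repeat{\alpha}{e} = \sum_{d \geq 0} \tr{\alpha}{e}/p^{ed}$. Writing $\tr{\alpha}{e} = a_1/p + \cdots + a_e/p^e$ with $0 \leq a_i \leq p-1$ (the first $e$ digits of $\alpha$'s non-terminating base $p$ expansion), this sum is just $\sum_{d \geq 0}\bigl(a_1/p^{ed+1} + \cdots + a_e/p^{ed+e}\bigr)$, which exhibits $\repeat{\alpha}{e}$ as a sum $\sum_{d \geq 1} b_d/p^d$ whose digits $b_d$ are $a_1, a_2, \ldots, a_e, a_1, a_2, \ldots, a_e, \ldots$ repeating with period $e$. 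Reading off the first $e$ digits then gives $\tr{\repeat{\alpha}{e}}{e} = a_1/p + \cdots + a_e/p^e = \tr{\alpha}{e}$, as desired.

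The only mild obstacle is making sure that the expansion produced in the previous paragraph really is the \emph{non-terminating} base $p$ expansion (in the sense of Definition \ref{ExpansionDefinition}), since the definition of the $e^{\text{th}}$ truncation tacitly refers to that canonical expansion. This splits into two cases: if $\tr{\alpha}{e} \neq 0$ (so at least one of $a_1, \ldots, a_e$ is nonzero), then the repeating pattern forces the digits $b_d$ not to be eventually zero, and we are done; if $\tr{\alpha}{e} = 0$, then $\repeat{\alpha}{e} = 0$ by Definition \ref{Repeat: D}, and both sides of (1) vanish by the convention $\tr{0}{e} = 0$ recorded in Definition \ref{TruncationTailDefinition}. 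Either case confirms the equality and completes the proof.
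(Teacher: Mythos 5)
Your proof is correct and follows exactly the route the paper intends: the paper's own ``proof'' merely declares both assertions intuitively clear from the repeating-digit description in Remark \ref{RepeatRemark} and leaves the verification to the reader, which is precisely what you have carried out (including the worthwhile check that the periodic expansion is genuinely the non-terminating one, and the degenerate case $\tr{\alpha}{e}=0$).
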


\begin{proof} These assertions are intuitively clear given the description in Remark \ref{RepeatRemark}, and the task of verifying the details is left to the reader.
\end{proof}

\begin{Lemma}
\label{EquivalentInequalities: L}  Let $\alpha \in [0,1]$.  For $e \geq 1$, the following conditions are equivalent:
\begin{enumerate}
\item \label{EI1} $\down{(p^e-1) \alpha} = p^e \tr{\alpha}{e}$.
\item \label{EI2} $\alpha \leq p^e \tail{\alpha}{e}$.
\item \label{EI3} $\alpha \geq \repeat{\alpha}{e}$.
\end{enumerate}
\end{Lemma}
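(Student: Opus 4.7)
First I would dispose of the trivial case $\alpha=0$: by our conventions $\tr{0}{e} = \tail{0}{e} = \repeat{0}{e} = 0$, and all three conditions reduce to $0=0$. So I assume $\alpha \in (0,1]$.

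The plan is to prove (\ref{EI1}) $\Longleftrightarrow$ (\ref{EI2}) via a direct analysis of the floor function, and then prove (\ref{EI2}) $\Longleftrightarrow$ (\ref{EI3}) by a purely algebraic manipulation using the definition of $\repeat{\alpha}{e}$. First I would recall the identity
\[ (p^e-1)\alpha = p^e \tr{\alpha}{e} + \bigl(p^e \tail{\alpha}{e} - \alpha\bigr),\]
which is exactly equation (2.2.2) in the proof of Lemma \ref{RoundingLemma}. Since $p^e \tr{\alpha}{e} \in \mathbb{N}$, taking floors gives $\down{(p^e-1)\alpha} = p^e \tr{\alpha}{e} + \down{p^e \tail{\alpha}{e} - \alpha}$. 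Thus (\ref{EI1}) is equivalent to $\down{p^e \tail{\alpha}{e} - \alpha} = 0$.

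Next I would note that both $\alpha$ and $p^e \tail{\alpha}{e}$ lie in $(0,1]$ by Lemma \ref{TruncationTailLemma}, so $p^e \tail{\alpha}{e} - \alpha$ lies in the open interval $(-1,1)$. Its floor therefore equals $0$ precisely when the quantity is non-negative, that is, when $\alpha \leq p^e \tail{\alpha}{e}$. This gives (\ref{EI1}) $\Longleftrightarrow$ (\ref{EI2}).

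For (\ref{EI2}) $\Longleftrightarrow$ (\ref{EI3}), I would substitute $\tail{\alpha}{e} = \alpha - \tr{\alpha}{e}$ into (\ref{EI2}) to rewrite it as $\alpha \leq p^e \alpha - p^e \tr{\alpha}{e}$, equivalently $p^e \tr{\alpha}{e} \leq (p^e-1)\alpha$. Dividing by $p^e-1 > 0$ and invoking Definition \ref{Repeat: D}, which gives $\repeat{\alpha}{e} = \tr{\alpha}{e} \cdot \frac{p^e}{p^e-1}$, this last inequality reads $\repeat{\alpha}{e} \leq \alpha$, which is condition (\ref{EI3}). I do not anticipate a real obstacle here; the only subtle point is checking the strict bounds on $p^e \tail{\alpha}{e} - \alpha$ (so that the floor is forced to be $0$ or $-1$), which is immediate from Lemma \ref{TruncationTailLemma}.
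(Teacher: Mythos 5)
Your proposal is correct and follows essentially the same route as the paper: the same decomposition $(p^e-1)\alpha = p^e\tr{\alpha}{e} + \(p^e\tail{\alpha}{e} - \alpha\)$ together with the bound $|p^e\tail{\alpha}{e}-\alpha|<1$ for (\ref{EI1})$\Leftrightarrow$(\ref{EI2}), and the identity $p^e\tr{\alpha}{e}=(p^e-1)\repeat{\alpha}{e}$ (in your case unwound directly from Definition \ref{Repeat: D}) for (\ref{EI2})$\Leftrightarrow$(\ref{EI3}). Your explicit treatment of the case $\alpha=0$ is a minor tidying of the paper's "we will assume $\alpha>0$."
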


Lemma \ref{EquivalentInequalities: L} is closely related to Lemma \ref{RoundingLemma}, and its proof proceeds along the same lines.

\begin{proof}
We will assume that $\alpha > 0$.  By definition, $\alpha = \tr{\alpha}{e} + \tail{\alpha}{e}$, and it follows that \begin{equation} \label{star} (p^e-1) \alpha = p^e \tr{\alpha}{e} + p^e \tail{\alpha}{e} - \alpha. \end{equation}  Lemma \ref{TruncationTailLemma} shows that both $\alpha$ and $p^e \tail{\alpha}{e}$ are contained in $(0,1]$, so that $| p^e \tail{\alpha}{e} - \alpha | < 1$.  Thus, \eqref{star} shows that  $\eqref{EI1}$ holds if and only if $\eqref{EI2}$ holds.  By Lemma \ref{RepeatLemma},  $p^e \tr{\alpha}{e} = (p^e-1) \repeat{\alpha}{e}$, and substituting this into $(\ref{star})$ shows that \begin{equation} \label{sides: e} (p^e-1)\left(\alpha - \repeat{\alpha}{e} \right) = p^e \tail{\alpha}{e} - \alpha. \end{equation} From \rref{sides: e}, we see that $\eqref{EI2}$ holds if and only if $\eqref{EI3}$ holds.
\end{proof}

\subsection{The set of all $F$-pure thresholds}

\begin{Definition}
\label{fptsetDefinition}
Let $\fptset{p}$ denote the set of all characteristic $p>0$ $F$-pure thresholds: $ \fptset{p} : = \set{ \fpt{R,f} : f \text{ is a non-zero, non-unit in an $F$-pure ring $R$ of characteristic } p }$.  
\end{Definition}

We stress that both the ambient ring $R$ and the element $f \in R$ are allowed to vary in Definition \ref{fptsetDefinition}. By Lemma \ref{BasicProperties: L}, we have that $\fptset{p} \subseteq [0,1]$.

\begin{Proposition}
\label{Repeating: P}
For any $\lambda \in \fptset{p}$ and $e \geq 1$, we have that $\repeat{\lambda}{e} \leq \lambda$.
\begin{proof}
There exists an $F$-pure ring $R$ and an element $f \in R$ such that $\lambda=\fpt{R,f}$.  Set $\alpha := \repeat{\lambda}{e}$.  By Lemma \ref{RepeatLemma}, we have that 
\begin{enumerate} 
\item \label{pointone} $\tr{\alpha}{e} = \tr{\repeat{\lambda}{e}}{e} = \tr{\lambda}{e}$, and 
\item \label{pointtwo} $(p^{e}-1) \cdot \alpha = p^{e} \tr{\lambda}{e} \in \mathbb{N}.$ 
\end{enumerate}
By Key Lemma \ref{FTTruncationLemma} and $\rref{pointone}$, the inclusion $R \cdot f^{\tr{\alpha}{e} }= R \cdot f^{\tr{\lambda}{e}} \subseteq \R{e}$ splits as a map of $R$-modules.  Corollary \ref{OneConditionSplittingCorollary} and $\rref{pointtwo}$ then imply that $\lambda \geq \alpha$.
\end{proof}
\end{Proposition}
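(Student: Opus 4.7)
My plan is to exhibit $\alpha := \repeat{\lambda}{e}$ as a parameter satisfying the hypotheses of Corollary \ref{OneConditionSplittingCorollary} and conclude the required inequality $\alpha \leq \lambda$ directly from that corollary. The two tools I expect to combine are Key Lemma \ref{FTTruncationLemma} (which converts knowledge of $\tr{\lambda}{e}$ into a concrete splitting statement for $f$) and Lemma \ref{RepeatLemma} (which tells me how truncations interact with the repeating operation).

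First, since $\lambda \in \fptset{p}$, fix an $F$-pure ring $R$ and a non-zero, non-unit $f \in R$ with $\lambda = \fpt{R,f}$. Setting $\alpha = \repeat{\lambda}{e}$, Lemma \ref{RepeatLemma} immediately yields the two identities $\tr{\alpha}{e} = \tr{\lambda}{e}$ and $(p^e-1)\alpha = p^e \tr{\lambda}{e}$. The second identity shows that $(p^e - 1)\alpha \in \mathbb{N}$, which is exactly the integrality hypothesis required by Corollary \ref{OneConditionSplittingCorollary}.

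Next, I invoke Key Lemma \ref{FTTruncationLemma} applied to $\lambda = \fpt{R,f}$ at level $e$: it directly produces an $R$-splitting of the inclusion $R \cdot f^{\tr{\lambda}{e}} \subseteq \R{e}$. Using the first identity $\tr{\alpha}{e} = \tr{\lambda}{e}$ obtained above, this is the same as a splitting of $R \cdot f^{\tr{\alpha}{e}} \subseteq \R{e}$. All hypotheses of Corollary \ref{OneConditionSplittingCorollary} are now in place, so applying it (with $d = e$) gives $\alpha \leq \fpt{f} = \lambda$, which is precisely the claim.

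I expect no real obstacle: the content of the proposition is that Corollary \ref{OneConditionSplittingCorollary} applies to $\repeat{\lambda}{e}$, and the only thing to verify is the bookkeeping in Lemma \ref{RepeatLemma}, which is already available. In other words, the substantive work was done in Key Lemma \ref{FTTruncationLemma} and its Corollary; the present proposition is the clean statement that results when those are applied to the distinguished parameter $\repeat{\lambda}{e}$.
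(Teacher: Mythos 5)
Your proposal is correct and follows essentially the same route as the paper's own proof: both extract the splitting of $R \cdot f^{\tr{\lambda}{e}} \subseteq \R{e}$ from Key Lemma \ref{FTTruncationLemma}, use Lemma \ref{RepeatLemma} to identify this with a splitting for $\tr{\repeat{\lambda}{e}}{e}$ and to verify the integrality hypothesis, and then conclude via Corollary \ref{OneConditionSplittingCorollary}. No differences worth noting.
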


\begin{Corollary}
\label{FTRoundingCorollary}
For any $e \geq 1$ and $ \lambda \in \fptset{p}$, the following hold and are equivalent:
\begin{enumerate}
\item $\down{ (p^e-1) \lambda } = p^e \tr{\lambda}e$.
\item $\lambda \leq p^e \tail{\lambda}e$.
\item $\lambda \geq \repeat{\lambda}{e}$.
\end{enumerate}
\end{Corollary}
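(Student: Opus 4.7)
The plan is to observe that this corollary follows almost immediately by combining the two preceding results: Proposition \ref{Repeating: P} (which establishes the inequality $\repeat{\lambda}{e} \leq \lambda$ for every $\lambda \in \fptset{p}$) with Lemma \ref{EquivalentInequalities: L} (which establishes that the three listed conditions are logically equivalent for any $\alpha \in [0,1]$). There is essentially no new content to prove; the task is to cite these results correctly.

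First, I would note that Lemma \ref{EquivalentInequalities: L}, applied with $\alpha = \lambda$, already gives the equivalence of the three conditions, since that lemma is stated for an arbitrary element of $[0,1]$ and $\fptset{p} \subseteq [0,1]$ by Lemma \ref{BasicProperties: L}. So the equivalence portion of the corollary requires no additional work.

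Next, to show that the conditions actually hold (not merely that they are equivalent), I would invoke Proposition \ref{Repeating: P}, which asserts precisely condition (3): $\lambda \geq \repeat{\lambda}{e}$. Since (3) holds and the three are equivalent, (1) and (2) also hold.

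I do not anticipate any obstacle, since both ingredients have already been proved earlier in the section. The only stylistic choice is whether to spell out the two-step deduction or to simply write something like: \emph{This follows immediately from Proposition \ref{Repeating: P} combined with Lemma \ref{EquivalentInequalities: L}.} I would opt for the one-sentence version to match the concise style of the surrounding corollaries.
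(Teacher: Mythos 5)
Your proposal is correct and matches the paper's proof exactly: the paper also deduces condition (3) from Proposition \ref{Repeating: P} and the equivalence of all three conditions from Lemma \ref{EquivalentInequalities: L}. No issues.
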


\begin{proof}  The third point is Proposition \ref{Repeating: P}, and all points are equivalent by Lemma \ref{EquivalentInequalities: L}.
\end{proof}

Corollary \ref{FTRoundingCorollary} places severe restrictions on the set $\fptset{p}$, as we see in Proposition \ref{fptInterval: P}.

\begin{Proposition} 
\label{fptInterval: P}
For every $e \geq 1$ and  $\beta \in [0,1] \cap \frac{1}{p^e} \cdot \mathbb{N}$, we have that \[ \fptset{p}  \cap \( \hspace{.05in} \beta, \frac{p^e}{p^e-1} \cdot \beta \hspace{.05in} \) = \emptyset. \]
\end{Proposition}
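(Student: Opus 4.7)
The plan is to argue by contradiction, using Corollary \ref{FTRoundingCorollary} as the main lever. Suppose $\lambda \in \fptset{p}$ with $\beta < \lambda < \frac{p^e}{p^e-1} \cdot \beta$; in particular, $\beta > 0$, so $\beta \geq \frac{1}{p^e}$.

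First I would apply Lemma \ref{RoundingLemma}(4) to the hypothesis $\beta \in [0,1] \cap \frac{1}{p^e} \cdot \mathbb{N}$ and $\lambda > \beta$ to conclude $\tr{\lambda}{e} \geq \beta$. This is the step where the assumption $\beta \in \frac{1}{p^e} \cdot \mathbb{N}$ is crucial: only then is $\beta$ a ``stopping point'' forcing the $e^{\th}$ truncation of $\lambda$ to be at least $\beta$.

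Next I would invoke Corollary \ref{FTRoundingCorollary} applied to $\lambda \in \fptset{p}$, using condition (3), which states $\lambda \geq \repeat{\lambda}{e}$. By Definition \ref{Repeat: D}, $\repeat{\lambda}{e} = \frac{p^e}{p^e-1} \cdot \tr{\lambda}{e}$, and combining with the previous step yields
\[ \lambda \;\geq\; \repeat{\lambda}{e} \;=\; \frac{p^e}{p^e-1} \cdot \tr{\lambda}{e} \;\geq\; \frac{p^e}{p^e-1} \cdot \beta, \]
directly contradicting the assumed upper bound $\lambda < \frac{p^e}{p^e-1} \cdot \beta$.

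There is no real obstacle here: the substantive content has already been packaged into Corollary \ref{FTRoundingCorollary} (via Proposition \ref{Repeating: P}, whose proof uses Key Lemma \ref{FTTruncationLemma} and Corollary \ref{OneConditionSplittingCorollary}). The proposition itself is a short deduction combining the rounding behavior of base $p$ truncations (Lemma \ref{RoundingLemma}) with the ``repeating'' inequality $\lambda \geq \repeat{\lambda}{e}$ forced on every element of $\fptset{p}$. The only thing to be careful about is the degenerate case $\beta = 0$, in which the interval is empty and there is nothing to prove.
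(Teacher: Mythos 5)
Your proof is correct and is essentially identical to the paper's: both apply Lemma \ref{RoundingLemma}(4) to get $\tr{\lambda}{e} \geq \beta$ and then the inequality $\lambda \geq \repeat{\lambda}{e} = \frac{p^e}{p^e-1}\tr{\lambda}{e}$ (the paper cites Proposition \ref{Repeating: P} directly, you cite its repackaging in Corollary \ref{FTRoundingCorollary}, which is the same content). No issues.
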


\noindent Proposition \ref{fptInterval: P} generalizes \cite[Proposition 4.3]{BMS2009}, in which $R$ is assumed to be an $F$-finite regular ring.

\begin{proof}
Let $\lambda \in \fptset{p}$. If $\lambda > \beta$, then $\tr{\lambda}{e} \geq \beta$ by Lemma \ref{RoundingLemma}.  Combining this with Proposition \ref{Repeating: P} and Definition \ref{Repeat: D}, we conclude that  $\lambda \geq \repeat{\lambda}{e} = \frac{p^e}{p^e-1} \cdot \tr{\lambda}{e} \geq \frac{p^e}{p^e-1} \cdot \beta$.
\end{proof}

\subsection{On purity at the threshold}  

We conclude by addressing the limiting behavior of the various types of purity given in Definition \ref{PurityDefinition}.

\begin{thm}
\label{MainTheorem}The pair $\pair{R}{f}{\fpt{f}}$ is $F$-pure and is not strongly $F$-pure.  Moreover, $\pair{R}{f}{\fpt{f}}$ is sharply $F$-pure if and only if $(p^e-1) \cdot \fpt{f} \in \mathbb{N}$ for some $e \geq 1$.
\end{thm}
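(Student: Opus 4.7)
The plan is to abbreviate $\lambda := \fpt{f}$ and to analyze each of the three clauses by comparing the relevant integer exponent of $f$ to the pivot quantity $p^e \tr{\lambda}{e}$, which by Key Lemma \ref{FTTruncationLemma} is the largest $a \in \mathbb{N}$ for which $R \cdot f^{a/p^e} \subseteq \R{e}$ splits over $R$. The three exponents to be compared are $\down{(p^e-1)\lambda}$, $\up{p^e\lambda}$, and $\up{(p^e-1)\lambda}$, corresponding to the three notions of purity in Definition \ref{PurityDefinition}. I may assume $\lambda > 0$, since for $\lambda = 0$ the $F$-purity of the pair reduces to the $F$-purity of $R$ itself.

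For the first assertion, I would invoke Corollary \ref{FTRoundingCorollary}, which gives $\down{(p^e-1)\lambda} = p^e \tr{\lambda}{e}$ for every $e \geq 1$. Then Key Lemma \ref{FTTruncationLemma} immediately implies that $R \cdot f^{\down{(p^e-1)\lambda}/p^e} = R \cdot f^{\tr{\lambda}{e}} \subseteq \R{e}$ splits for every $e$, which is exactly the condition in Definition \ref{PurityDefinition}(1). For the failure of strong $F$-purity, Lemma \ref{RoundingLemma}(1) yields $\up{p^e\lambda} = p^e \tr{\lambda}{e} + 1$, which strictly exceeds the maximum from Key Lemma \ref{FTTruncationLemma}; hence $R \cdot f^{\up{p^e\lambda}/p^e} \subseteq \R{e}$ splits for no $e \geq 1$.

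For the sharp $F$-purity criterion, the bounds in Lemma \ref{RoundingLemma}(3) give the sandwich $p^e \tr{\lambda}{e} \leq \up{(p^e-1)\lambda} \leq p^e \tr{\lambda}{e} + 1$, while Corollary \ref{FTRoundingCorollary} forces $\down{(p^e-1)\lambda} = p^e \tr{\lambda}{e}$. Combining these, $\up{(p^e-1)\lambda}$ equals $p^e \tr{\lambda}{e}$ precisely when $(p^e-1)\lambda \in \mathbb{N}$, and otherwise equals $p^e \tr{\lambda}{e} + 1$. By Key Lemma \ref{FTTruncationLemma}, the splitting required for sharp $F$-purity at level $e$ occurs if and only if $\up{(p^e-1)\lambda} \leq p^e \tr{\lambda}{e}$, that is, if and only if $(p^e-1)\lambda \in \mathbb{N}$. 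Taking the disjunction over $e \geq 1$ gives the stated characterization.

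This argument is essentially a careful comparison of integer parts once Key Lemma \ref{FTTruncationLemma} and Corollary \ref{FTRoundingCorollary} are in hand, and there is no substantial obstacle to foresee. The key conceptual point worth highlighting is that Corollary \ref{FTRoundingCorollary} is exactly what pins down $\down{(p^e-1)\lambda}$ on the nose for an $F$-pure threshold, in contrast to the merely two-term sandwich in Lemma \ref{RoundingLemma}(2) available for an arbitrary real number; without that sharpening, $\down{(p^e-1)\lambda}$ could a priori be $p^e \tr{\lambda}{e} - 1$, and the clean dichotomy giving both $F$-purity and the sharp $F$-purity criterion would collapse.
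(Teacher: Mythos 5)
Your proposal is correct and follows essentially the same route as the paper: Corollary \ref{FTRoundingCorollary} pins down $\down{(p^e-1)\fpt{f}} = p^e\tr{\fpt{f}}{e}$, Lemma \ref{RoundingLemma} handles the ceilings, and Key Lemma \ref{FTTruncationLemma} converts the resulting integer comparisons into the three splitting statements. The only cosmetic difference is that you route the sharp $F$-purity case through the sandwich in Lemma \ref{RoundingLemma}(3) rather than the paper's ``equality throughout'' phrasing, which is the same computation.
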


\noindent Theorem \ref{MainTheorem} generalizes \cite[Proposition 2.6]{Hara2006} and \cite[Corollary 5.4 $+$ Remark 5.5]{Schwede2008}, in which $R$ is assumed be an $F$-finite (complete) regular local ring.

\begin{proof}  By Corollary \ref{FTRoundingCorollary}, we have that $\down{ (p^e-1) \fpt{f}} = p^e \tr{\fpt{f}}{e}$, and so Key Lemma \ref{FTTruncationLemma} implies that the inclusion $R \cdot f^{\down{(p^e-1) \fpt{f}}/p^e} = R \cdot f^{\tr{\fpt{f}}{e}} \subseteq \R{e}$ splits over $R$.  This shows that $\pair{R}{f}{\fpt{f}}$ is $F$-pure.

By Lemma \ref{RoundingLemma}, $\up{p^e \fpt{f} }  = p^e \tr{\fpt{f}}{e} + 1$, and applying Key Lemma \ref{FTTruncationLemma} shows that the inclusion $R \cdot f^{\tr{\fpt{f}}{e} + 1/p^e} \subseteq \R{e}$ \emph{never} splits as a map of $R$-modules.  This shows that $\pair{R}{f}{\fpt{f}}$ is not strongly $F$-pure.

Finally, suppose that $\pair{R}{f}{\fpt{f}}$ is sharply $F$-pure.  By Definition \ref{PurityDefinition},  \begin{equation} \label{sfp1: e} R \cdot f^{\up{(p^e-1)\fpt{f}}/p^e} \subseteq \R{e} \text{ splits as a map of $R$-modules for some } e \geq 1.\end{equation}

\noindent By Key Lemma \ref{FTTruncationLemma}, we know that \rref{sfp1: e} holds if and only if   \begin{equation} \label{sfp2: e} \up{(p^e-1)\fpt{f}} \leq p^e \tr{\fpt{f}}{e} = \down{(p^e-1) \fpt{f}}, \end{equation}  where the last equality in \eqref{sfp2: e} comes  from Corollary \ref{FTRoundingCorollary}.  Thus, we must have equality throughout in \eqref{sfp2: e}, which we observe holds if and only if $(p^e-1) \cdot \fpt{f} \in \mathbb{N}$.
\end{proof}

\begin{Example}
\label{CounterExample: E}
There are many instances for which $F$-purity and sharp $F$-purity are distinct.  The simplest example such that $(p^e - 1) \cdot \fpt{f} \notin \mathbb{N}$ for any $e \geq 1$ is the following:  $\fpt{\mathbb{F}_p[[x]], x^p} = \frac{1}{p}$.  For more examples where $\fpt{f}$ is a rational number whose denominator is divisible by $p$, see \cite{Diagonals, Binomials}.
\end{Example}


\bibliographystyle{alpha}
\bibliography{refs}

\begin{thebibliography}{{Tak}11}

\bibitem[BL04]{BL2004}
Manuel Blickle and Robert Lazarsfeld.
\newblock An informal introduction to multiplier ideals.
\newblock In {\em Trends in commutative algebra}, volume~51 of {\em Math. Sci.
  Res. Inst. Publ.}, pages 87--114. Cambridge Univ. Press, Cambridge, 2004.

\bibitem[BMS08]{BMS2008}
Manuel Blickle, Mircea Musta{\c{t}}{\v{a}}, and Karen~E. Smith.
\newblock Discreteness and rationality of {$F$}-thresholds.
\newblock {\em Michigan Math. J.}, 57:43--61, 2008.
\newblock Special volume in honor of Melvin Hochster.

\bibitem[BMS09]{BMS2009}
Manuel Blickle, Mircea Musta{\c{t}}{\u{a}}, and Karen~E. Smith.
\newblock {$F$}-thresholds of hypersurfaces.
\newblock {\em Trans. Amer. Math. Soc.}, 361(12):6549--6565, 2009.

\bibitem[BSTZ09]{BSTZ2009}
Manuel Blickle, Karl Schwede, Shunsuke Takagi, and Wenliang Zhang.
\newblock Discreteness and rationality of {$F$}-jumping numbers on singular
  varieties.
\newblock {\em Mathematische Annalen, Volume 347, Number 4, 917-949, 2010}, 06
  2009.

\bibitem[Fed83]{Fed1983}
Richard Fedder.
\newblock {$F$}-purity and rational singularity.
\newblock {\em Trans. Amer. Math. Soc.}, 278(2):461--480, 1983.

\bibitem[Har06]{Hara2006}
Nobuo Hara.
\newblock F-pure thresholds and {F}-jumping exponents in dimension two.
\newblock {\em Math. Res. Lett.}, 13(5-6):747--760, 2006.
\newblock With an appendix by Paul Monsky.

\bibitem[Her11a]{Diagonals}
Daniel~J. Hern{\'a}ndez.
\newblock {$F$}-invariants of diagonal hypersurfaces.
\newblock {\em arXiv:1112.2425v1 [math.AC]}, 2011.

\bibitem[Her11b]{Binomials}
Daniel~J. Hern{\'a}ndez.
\newblock {$F$}-pure thresholds of binomial hypersurfaces.
\newblock {\em arXiv:1112.2427v1 [math.AC]}, 2011.

\bibitem[Her11c]{Polynomials}
Daniel~J. Hern{\'a}ndez.
\newblock {$F$}-purity versus log canonicity for polynomials.
\newblock {\em arXiv:1112.2423v1 [math.AC]}, 2011.

\bibitem[HH90]{HH1990}
Melvin Hochster and Craig Huneke.
\newblock Tight closure, invariant theory, and the {B}rian\c con-{S}koda
  theorem.
\newblock {\em J. Amer. Math. Soc.}, 3(1):31--116, 1990.

\bibitem[How01a]{Howald2001a}
J.~A. Howald.
\newblock Multiplier ideals of monomial ideals.
\newblock {\em Trans. Amer. Math. Soc.}, 353(7):2665--2671 (electronic), 2001.

\bibitem[How01b]{Howald2001b}
Jason Howald.
\newblock Multiplier ideals of sufficiently general polynomials.
\newblock {\em Preprint, math.AG/0303203}, 2001.

\bibitem[HR74]{HR1974}
Melvin Hochster and Joel~L. Roberts.
\newblock Rings of invariants of reductive groups acting on regular rings are
  {C}ohen-{M}acaulay.
\newblock {\em Advances in Math.}, 13:115--175, 1974.

\bibitem[HR76]{HR1976}
Melvin Hochster and Joel~L. Roberts.
\newblock The purity of the {F}robenius and local cohomology.
\newblock {\em Advances in Math.}, 21(2):117--172, 1976.

\bibitem[HW02]{HW2002}
Nobuo Hara and Kei-Ichi Watanabe.
\newblock F-regular and {F}-pure rings vs. log terminal and log canonical
  singularities.
\newblock {\em J. Algebraic Geom.}, 11(2):363--392, 2002.

\bibitem[KLZ09]{KLZ2009}
Mordechai Katzman, Gennady Lyubeznik, and Wenliang Zhang.
\newblock On the discreteness and rationality of {$F$}-jumping coefficients.
\newblock {\em J. Algebra}, 322(9):3238--3247, 2009.

\bibitem[Kun69]{Kunz}
Ernst Kunz.
\newblock Characterizations of regular local rings for characteristic {$p$}.
\newblock {\em Amer. J. Math.}, 91:772--784, 1969.

\bibitem[Laz04]{Laz2004}
Robert Lazarsfeld.
\newblock {\em Positivity in algebraic geometry. {II}}, volume~49 of {\em
  Ergebnisse der Mathematik und ihrer Grenzgebiete. 3. Folge. A Series of
  Modern Surveys in Mathematics [Results in Mathematics and Related Areas. 3rd
  Series. A Series of Modern Surveys in Mathematics]}.
\newblock Springer-Verlag, Berlin, 2004.
\newblock Positivity for vector bundles, and multiplier ideals.

\bibitem[MTW05]{MTW2005}
Mircea Musta{\c{t}}{\v{a}}, Shunsuke Takagi, and Kei-ichi Watanabe.
\newblock F-thresholds and {B}ernstein-{S}ato polynomials.
\newblock In {\em European {C}ongress of {M}athematics}, pages 341--364. Eur.
  Math. Soc., Z{\"u}rich, 2005.

\bibitem[Sch08]{Schwede2008}
Karl Schwede.
\newblock Generalized test ideals, sharp {$F$}-purity, and sharp test elements.
\newblock {\em Math. Res. Lett.}, 15(6):1251--1261, 2008.

\bibitem[Smi97]{Karen1997}
Karen~E. Smith.
\newblock Vanishing, singularities and effective bounds via prime
  characteristic local algebra.
\newblock In {\em Algebraic geometry---{S}anta {C}ruz 1995}, volume~62 of {\em
  Proc. Sympos. Pure Math.}, pages 289--325. Amer. Math. Soc., Providence, RI,
  1997.

\bibitem[Tak04]{Tak2004}
Shunsuke Takagi.
\newblock F-singularities of pairs and inversion of adjunction of arbitrary
  codimension.
\newblock {\em Invent. Math.}, 157(1):123--146, 2004.

\bibitem[{Tak}11]{Takagi2011}
S.~{Takagi}.
\newblock "{Adjoint ideals and a correspondence between log canonicity and
  F-purity}",.
\newblock {\em arXiv:1105.0072v2 [math.AG]}, 2011.

\bibitem[TW04]{TW2004}
Shunsuke Takagi and Kei-ichi Watanabe.
\newblock On {F}-pure thresholds.
\newblock {\em J. Algebra}, 282(1):278--297, 2004.

\end{thebibliography}

\end{document}